\renewcommand{\epsilon}{\varepsilon}
 \DeclareMathOperator{\spt}{spt}
\def\Xint#1{\mathchoice
{\XXint\displaystyle\textstyle{#1}}%
{\XXint\textstyle\scriptstyle{#1}}%
{\XXint\scriptstyle\scriptscriptstyle{#1}}%
{\XXint\scriptscriptstyle\scriptscriptstyle{#1}}%
\!\int}
\def\XXint#1#2#3{{\setbox0=\hbox{$#1{#2#3}{\int}$ }
\vcenter{\hbox{$#2#3$ }}\kern-.6\wd0}}
\def\dashint{\Xint-}
\def\div{\mathrm{div} }
\newtheorem{theorem}{Theorem}[section]
\newtheorem{lemma}[theorem]{Lemma}
\newtheorem{proposition}[theorem]{Proposition}
\newtheorem{remark}[theorem]{Remark}
\newtheorem{definition}[theorem]{Definition}
\newtheoremstyle{TheoremNum}
        {\topsep}{\topsep}              
        {\itshape}                      
        {}                              
        {\bfseries}                     
        {.}                             
        { }                             
        {\thmname{#1}\thmnote{ \bfseries #3}}
    \theoremstyle{TheoremNum}
\title{On the regularity of stationary points of a nonlocal isoperimetric problem}
\author{Dorian Goldman \thanks{dg443@dpmms.cam.ac.uk, DPMMS University of Cambridge, Cambridge (UK)} \and Alexander Volkmann \thanks{alexander.volkmann@aei.mpg.de, Albert Einstein Institute, Potsdam-Golm}}
\date{}
\begin{document}
\nocite{Volkmann:2010}
\maketitle

\begin{abstract}
In this article we establish $C^{3,\alpha}$-regularity of the reduced boundary of stationary
points of a nonlocal isoperimetric problem in a domain $\Omega \subset \mathbb{R}^n$. In particular, stationary points satisfy the corresponding Euler-Lagrange equation classically on the reduced boundary. Moreover, we show that the singular set has zero $(n-1)$-dimensional Hausdorff measure. This complements the results in \cite{Choksi-Sternberg:2007} in which the Euler-Lagrange equation was derived under the assumption of $C^2$-regularity of the topological boundary and the results in \cite{Sternberg-Topaloglu:2011} in which the authors assume local minimality. 
In case $\Omega$ has non-empty boundary, we show that stationary points meet the boundary of $\Omega$ orthogonally in a weak sense, unless they have positive distance to it. 
\end{abstract}

\section{Introduction}
The main goal of this work is to establish $C^{3,\alpha}$-regularity of the reduced boundary of stationary points of a nonlocal isoperimetric problem, and estimate the size of its singular set. More precisely, we consider the following functional
\begin{equation}\label{nonloceqn1}
\mathcal E_\gamma(E): = P(E,\Omega) + \gamma \int_E\int_E G(x,y)\,dy\,dx+\int_E f(x)\,dx,
\end{equation}
where $\Omega\subset \mathbb R^n$ is a domain (open, connected) of class $C^2$, $E\subset \Omega$ is a bounded set of finite perimeter $P(E,\Omega)$ in $\Omega$, $\gamma  \geq 0$, 
$f\in C_{loc}^{2}(\overline \Omega)$, and $G$ denotes a symmetric ``kernel'' (see below for precise assumptions on $G$).
The reader should think of $G$ as the Green's function of the Laplace operator with Neumann boundary condition in $\Omega$ or the Newtonian potential in case $\Omega= \mathbb R^n$.
 
Physically, the first term in \eqref{nonloceqn1} models surface tension an thus its minimization favors clustering, whereas the second term can be used to model a competing repulsive term. The third term can be used to model additional external forces, cf. \cite{Giusti:1981}.
The functional $\mathcal E_\gamma$ is often referred to as the \emph{sharp-interface Ohta-Kawasaki energy} \cite{Ohta-Kawasaki:1986} in connection with di-block copolymer melts. 
Minimizers of $\mathcal E_\gamma$ under a volume constraint describe a number of polymer systems \cite{deGennes:1979, Nagaev:1995,Ren-Wei:2000} as well
as many other physical systems \cite{Chen-Khachaturyan:1993,Emery:1993,Glotzer-DiMarzio-Muthukumar:1995,Lundqvist-March:1983,Nagaev:1995} due to the fundamental nature of the Coulombic term. Despite
the abundance of physical systems for which \eqref{nonloceqn1} is applicable, rigorous mathematical analysis for the case $\gamma \neq 0$ is fairly recent.
We refer to the introduction of \cite{Cicalese-Spadaro:2013} for more details and an account of the results about this functional.

Regularity for (local) minimizers of $\mathcal E_\gamma$ under a volume constraint was established by Sternberg and Topaloglu \cite{Sternberg-Topaloglu:2011}. Sternberg and Topaloglu showed that any local minimizer $E$ of $\mathcal E_\gamma$ in a ball $B_\rho(x)$ is a so called \emph{$(K,\varepsilon)$-minimizer} of perimeter in the sense that
\[
P(E,B_\rho(x))\leq P(F,B_\rho(x)) +K\rho^{n-1+\varepsilon} \quad\text{for all $F$ such that $F\Delta E \subset\subset B_\rho(x)$,}
\]
for some $K<\infty$ some $\varepsilon \in (0,1]$.
Standard results (see for example \cite{Giusti:1984,Massari-Miranda:1984,Maggi:2012}) imply that the reduced boundary $\partial^*E\cap B_\rho(x)$ is of class $C^{1,\frac{\varepsilon}{2}}$ and that the singular set $(\partial E\setminus\partial^*E)\cap B_\rho(x)$ has Hausdorff dimension at most $n-8$. Standard elliptic regularity theory then implies higher regularity.

For \emph{stationary} points of $\mathcal E_\gamma$, which are not a priori minimizing in any sense, these methods are no longer available. To this end R\"{o}ger and Tonegawa \cite[Section 7.2]{Roeger-Tonegawa:2008} proved $C^{3,\alpha}$-regularity of the reduced boundary of stationary points of $\mathcal E_\gamma$ that arise as the limit of stationary points of the (diffuse) Ohta-Kawasaki energy with parameter $\varepsilon$ going to zero. They also showed that in this case the singular set has Hausdorff dimension at most $n-1$.

Our main result (Theorem~\ref{thm:regularity}) removes this special assumption. In particular, we do not require any minimality assumptions. As part of our proof we establish a weak measure theoretic form of the Euler-Lagrange equation for arbitrary stationary points of $\mathcal E_\gamma$ under very weak regularity assumptions (we only require the set to have finite perimeter). The Euler-Lagrange equation for stationary points of $\mathcal E_\gamma$ has previously been derived by Choksi and Sternberg \cite{Choksi-Sternberg:2007}, however assuming $C^2$-regularity of the topological boundary. 
An application of our main result will be used in \cite{Goldman:2014} which studies the asymptotics of stationary points of the Ohta-Kawaski energy and its
diffuse interface version. 
\\\\
\noindent
In order to state our main result we need to introduce some notation and specify our hypotheses:

\medskip \noindent
For a given domain $\Omega \subset \mathbb R^n$ with $C^2$-boundary we consider two classes of sets.
\[
\mathcal A:= \{ E \subset \Omega: E\;\text{is bounded and } P(E,\Omega) < +\infty \} \quad\text{and}\quad
\mathcal A_m:= \{ E \in \mathcal A: | E | = m\},
\]
where $m \in (0,\vert \Omega \vert)$.
A stationary point of $\mathcal E_\gamma$ in $\mathcal{A}$ or $\mathcal{A}_m$ is then defined as follows.

\begin{definition}[stationary point of $\mathcal E_\gamma$]\label{defcp}
A set $E \in \mathcal{A}$ is said to be a \emph{stationary point of $\mathcal E_\gamma$ (see \eqref{nonloceqn1}) in $\mathcal{A}$} if for every vector field $X \in C_c^1(\mathbb{R}^n;\mathbb{R}^n)$ with $X\cdot \nu_\Omega =0$ on $\partial \Omega$ we have that
\begin{align}
\label{dervan} \frac{d}{dt}\Big|_{t=0}\mathcal E_{\gamma}(\phi_t(E)) = 0,\end{align}
where $\{\phi_t\}$ is the flow of $X$, i.e. $\partial_t\phi_t =X\circ \phi_t $, $\phi_0 ={\rm id}$.
If \eqref{dervan} holds only for all $X$ such that $\phi_t(E) \in \mathcal{A}_m$ for all $t \in (-\varepsilon,\varepsilon)$ and some small $\varepsilon > 0$, then we call $E$
a \emph{stationary point of $\mathcal E_\gamma$ in $\mathcal{A}_m$}.
\end{definition}
\noindent 
We now specify the assumptions that we impose on the function $G$ appearing in \eqref{nonloceqn1}.

Firstly, we let $\Gamma$ be the fundamental solution of the Laplace operator given by
\[
\Gamma(x,y):=
\begin{cases}
\frac{1}{\omega_{n}(n-2)}\frac{1}{|x-y|^{n-2}}&,n\geq 3\\
-\frac{1}{2\pi}\log|x-y| &,n=2.
\end{cases}
\]
Here $\omega_{n} = \mathcal H^n(\mathbb S^n)$.
We assume that
\[
G(x,y)=
\Gamma(x,y) +R(x,y),
\]
where $R$ is a symmetric corrector function. I.e. 
\[
\begin{cases}
\Delta R(\cdot,y) = \frac{1}{|\Omega|}&\text{in $\Omega$}\\
\frac{\partial R(\cdot,y)}{\partial \nu_\Omega} = - \frac{\partial \Gamma(\cdot,y)}{\partial \nu_\Omega} &\text{on $\partial \Omega$}
\end{cases}
\]
for all $y\in \Omega$. Here we interpret $|\Omega|^{-1}$ to be zero for unbounded domains $\Omega$.
In case $\Omega$ is bounded $G$ is a Neumann Green's function of the Laplace operator.
In case $\Omega = \mathbb R^2$ we also allow that $G(x,y)=\Gamma_\beta(x,y)$ for $\beta \in (0,1)$, where $\Gamma_\beta(x,y):= |x-y|^{-\beta}$. 

For a bounded Borel set $E \subset \Omega$ we define 
\begin{equation}\label{vdef1}
\phi_E(x) := \int_{E} G(x,y) \,dy,\end{equation}
to be the \emph{potential} of $E$ associated to the kernel $G$. 
By standard elliptic theory we have $\phi_E\in C_{loc}^{1,\alpha}( \overline\Omega)$.
\\\\
\noindent
Our main result reads as follows.
\begin{theorem}\label{thm:regularity}
 Let $E$ be a stationary point of the functional $\mathcal E_\gamma$ in $\mathcal{A}$ or $\mathcal{A}_m$ with $f$ and $G$ as above. Then the reduced boundary $\partial_\Omega^*E=\partial^* E\cap \Omega$ is of class  $C^{3,\alpha}$ for all $\alpha \in (0,1)$. In particular, the equation
\begin{equation}\label{EL1} H +2 \gamma \phi_E + f =   \lambda,\end{equation}
holds classically on $\partial_\Omega^*E$ where $H$ is the mean curvature\footnote{by our convention the mean curvature is chosen such that the boundary of the unit ball in $\mathbb R^n$ has positive mean curvature equal to $n-1$} of $\partial_\Omega^*E$,
$\lambda$ is a Lagrange multiplier, and $\phi_E$ is the potential arising from $E$, given by \eqref{vdef1}. (When $E$ is a stationary point in the class $\mathcal{A}$, then $\lambda = 0$.)
The measure $\mu_E =\mathcal H^{n-1}\llcorner \partial_\Omega^*E$ is weakly orthogonal to $\partial \Omega$ in the sense that
\[
\int_{\partial_\Omega^*E}{\rm div}_EX\,d\mathcal H^{n-1} = -\int_{\partial_\Omega^*E}\vec H \cdot X\,d\mathcal H^{n-1} ,
\]
for all $X \in C_c^1(\mathbb R^n;\mathbb R^n)$ with $X\cdot \nu_\Omega = 0$ on $\partial \Omega$.

Moreover, the singular set $(\partial E \setminus \partial^*E)\cap \Omega$ is a relatively closed subset of $\partial E \cap \Omega$ which satisfies $\mathcal H^{n-1}((\partial E \setminus \partial^*E)\cap \Omega)=0$.
\end{theorem}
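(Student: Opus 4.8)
The plan is to exploit the first variation formula for $\mathcal{E}_\gamma$ to show that a stationary point is a $(\Lambda, r_0)$-minimizer (equivalently, a $(K,\varepsilon)$-minimizer) of the perimeter, after which the stated regularity follows from standard De Giorgi–type theory together with elliptic bootstrap. Concretely, I would first compute $\frac{d}{dt}\big|_{t=0}\mathcal{E}_\gamma(\phi_t(E))$ for $X \in C_c^1(\mathbb{R}^n;\mathbb{R}^n)$ with $X\cdot\nu_\Omega = 0$ on $\partial\Omega$. The perimeter term contributes $\int_{\partial_\Omega^* E}\div_E X\, d\mathcal{H}^{n-1}$; the nonlocal term, using the symmetry of $G$, contributes $2\gamma\int_E (\nabla\phi_E\cdot X)\,dx$ — here one must be careful with the singularity of $\Gamma$, but since $\phi_E \in C^{1,\alpha}_{loc}(\overline\Omega)$ the term is a genuine volume integral with a bounded integrand; the forcing term contributes $\int_E (\nabla f \cdot X + f\,\div X)\,dx = \int_E \div(fX)\,dx$. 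In the constrained class $\mathcal{A}_m$ one restricts to volume-preserving $X$, i.e. those with $\int_E \div X\, dx = 0$, which by a standard Lagrange-multiplier argument produces the constant $\lambda$; in the class $\mathcal{A}$ no constraint is present and $\lambda = 0$. The weak orthogonality statement is then exactly the assertion that $\int_{\partial_\Omega^* E}\div_E X\, d\mathcal{H}^{n-1} = -\int_{\partial_\Omega^* E}\vec{H}\cdot X\, d\mathcal{H}^{n-1}$ holds in the distributional sense, i.e. that the reduced boundary has a bounded generalized mean curvature vector $\vec H = -(2\gamma\phi_E + f - \lambda)\nu_E$; this is read off directly from the first variation identity.

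With the first variation in hand, the $(\Lambda, r_0)$-minimality follows by a comparison/penalization argument: given a competitor $F$ with $F\Delta E \subset\subset B_\rho(x_0)\cap\Omega$, one estimates $|\mathcal{E}_\gamma(E) - \mathcal{E}_\gamma(F)|$ away from the perimeter term. The nonlocal term difference is controlled by $C|E\Delta F| \le C\rho^n$ since $\phi_E$ is bounded on compact subsets (and, for the unbounded-domain kernels $\Gamma$, $\Gamma_\beta$, by the potential estimates giving boundedness of $\phi_E$ on $B_\rho(x_0)$); the forcing term difference is likewise bounded by $\|f\|_{L^\infty(B_\rho)}|E\Delta F| \le C\rho^n$. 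Hence
\[
P(E, B_\rho(x_0)) \le P(F, B_\rho(x_0)) + C\rho^n \le P(F,B_\rho(x_0)) + C\rho^{n-1+1},
\]
so $E$ is a $(K,\varepsilon)$-minimizer with $\varepsilon = 1$. By the cited results (\cite{Giusti:1984,Massari-Miranda:1984,Maggi:2012}), $\partial_\Omega^* E$ is $C^{1,\alpha}$ for every $\alpha \in (0,1)$, and the singular set $(\partial E \setminus \partial^* E)\cap\Omega$ is relatively closed with Hausdorff dimension at most $n-8$; in particular $\mathcal{H}^{n-1}((\partial E \setminus \partial^* E)\cap\Omega) = 0$ when $n \ge 2$ (for $n \le 7$ it is actually empty).

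For the higher regularity one bootstraps the Euler–Lagrange equation \eqref{EL1} on the $C^{1,\alpha}$ graph: writing $\partial_\Omega^* E$ locally as a graph of $u$, the mean curvature operator is a second-order quasilinear uniformly elliptic operator in $u$, and the equation reads $H(u) = \lambda - 2\gamma\phi_E - f$. Since $\phi_E \in C^{1,\alpha}_{loc}$ and $f \in C^2_{loc}$, the right-hand side is $C^{1,\alpha}$, so Schauder theory gives $u \in C^{3,\alpha}$; to go beyond $C^{1,\alpha}$ initially one may first note the right-hand side is at least $C^{0,\alpha}$, obtain $u \in C^{2,\alpha}$, then re-examine $\phi_E$. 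In fact $\phi_E$ solves $-\Delta\phi_E = \chi_E - c$ (with $c = \gamma$-dependent constant, zero on unbounded domains) with Neumann data, and once $\partial_\Omega^* E$ is $C^{2,\alpha}$ one does not gain further from this PDE directly, so the gain to $C^{3,\alpha}$ comes precisely from $\phi_E \in C^{1,\alpha}$ feeding into the mean-curvature Schauder estimate. The main obstacle I anticipate is the careful justification of the first variation of the nonlocal term — handling the kernel singularity, the change of variables under $\phi_t$, and the symmetrization that produces the factor $2\gamma$ — together with verifying that the constrained stationarity in $\mathcal{A}_m$ indeed yields a genuine constant Lagrange multiplier rather than a merely locally-defined one; this requires constructing, for any $X$, a companion vector field restoring the volume to second order, which is standard but needs $|\partial_\Omega^* E| > 0$.
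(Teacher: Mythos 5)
There is a fatal gap in the middle step. Your proposal deduces that a stationary point $E$ is a $(K,\varepsilon)$-minimizer of perimeter by estimating $\mathcal E_\gamma(F)-\mathcal E_\gamma(E)$ for a competitor $F$ and then absorbing the nonlocal and forcing terms into $C\rho^n$. But to arrive at $P(E,B_\rho)\leq P(F,B_\rho)+C\rho^n$ you need the inequality $\mathcal E_\gamma(E)\leq\mathcal E_\gamma(F)$, which is \emph{local minimality}, not stationarity. Stationarity means only that the first variation $\frac{d}{dt}\big|_{t=0}\mathcal E_\gamma(\phi_t(E))$ vanishes for admissible flows; it gives no global or even local comparison principle with competitors $F$ that differ from $E$ by a set of positive measure. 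This is precisely the obstruction the paper highlights in its introduction: for stationary points, which are not a priori minimizing in any sense, the $(K,\varepsilon)$-minimizer/De Giorgi machinery (cf.~Sternberg--Topaloglu) is unavailable.

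The incorrect deduction surfaces in your conclusion as well: you claim the singular set has Hausdorff dimension at most $n-8$ (empty for $n\leq 7$). That is the right answer for $(K,\varepsilon)$-minimizers, but it is false for stationary points. The paper's Remark~1.3 exhibits a stationary point of the perimeter in $B_1(0)\subset\mathbb R^n$ (so $\gamma=0$, $f=0$) with singular set $\{x_1=x_2=0\}$, which is $(n-2)$-dimensional. The theorem therefore asserts only the weaker $\mathcal H^{n-1}$-null statement, and any proof route must respect that. The paper's actual argument does not go through $(K,\varepsilon)$-minimality at all: after establishing via the first variation that $\mu_E$ is an integer rectifiable $(n-1)$-varifold with bounded generalized mean curvature $\vec H=-(\lambda-2\gamma\phi_E-f)\nu_E$, it invokes Allard's regularity theorem at points of density $1$ (i.e.~on $\partial_\Omega^*E$, by De Giorgi's structure theorem) to get local $C^{1,\alpha}$ graphs, then bootstraps exactly as you do. For the singular set it uses the monotonicity formula, which gives $\Theta(\mu_E,x)\geq 1$ and a lower area-ratio bound $\mu_E(B_\rho(x))\geq\frac{\alpha_{n-1}}{2}\rho^{n-1}$ for small $\rho$ at \emph{every} point of $\mathrm{spt}(\mu_E)$, together with a Vitali covering argument and the fact that $\mu_E$ charges none of $(\partial E\setminus\partial^*E)\cap\Omega$, to conclude $\mathcal H^{n-1}$-nullity. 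Your first-variation computation, Lagrange-multiplier discussion, and Schauder bootstrap are in line with the paper; the replacement you need is Allard $+$ monotonicity in place of $(K,\varepsilon)$-minimality $+$ De Giorgi.
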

\begin{remark}
The estimate on the singular set in Theorem~\ref{thm:regularity} is optimal. This can already be seen in the case $\gamma =0$. 
E.g. let $\Omega = B_1(0) \subset \mathbb R^n$ and set $E:=\{x=(x_1,...,x_n)\in \Omega: x_1\cdot x_2>0\}$. Then $E$ is a stationary point of the perimeter functional with singular set $(\partial E \setminus \partial^*E)\cap \Omega = \{ x \in \Omega: x_1= x_2 =0\}$.
\end{remark}

Our paper is organized as follows.
In Section \ref{sec:notation} we introduce our notation and review the basic theory of rectifiable varifolds and sets of finite perimeter, and present Allard's regularity theorem and De Giorgi's structure theorem for the reader's convenience. 
In Section \ref{sec:prelem} we prove some preliminary results that are needed in order to prove Theorem \ref{thm:regularity}.
In Section \ref{sec:proofofmain} we prove Theorem \ref{thm:regularity}.
In Section \ref{sec:bdryregularity} we include, for convenience, the regularity for local minimizers of $\mathcal E_\gamma$ near boundary points $x\in \partial E \cap \partial \Omega$. This has already been proven independently by Julin and Pisante \cite[Theorem 3.2]{Julin-Pisante:2013}.

\section{Notation and preliminaries}\label{sec:notation}
Throughout this work we assume that $\Omega\subset \mathbb R^n$, $n\geq 2$, is a domain (open, connected) of class $C^2$ (although the regularity assumption on the boundary is only needed when we consider vector fields that do not have compact support inside $\Omega$). 
In this section we introduce our notation and summarize basic results from geometric measure theory that are needed in the sequel. For more details on the subject we refer the reader to \cite{Evans-Gariepy:1992,Giusti:1984,Maggi:2012,Simon:1983}.

\subsection{Varifolds and Allard's regularity theorem}
Here we collect basic definitions for varifolds and state Allard's regularity theorem. An $\mathcal H^{k}$-measurable set $M \subset \mathbb R^n$ is called \emph{countably $k$-rectifiable} if
\[ M =  \bigcup_{j=0}^\infty N_j,\]
where $N_j \subset\mathbb R^n$, $0\leq j\leq n-1$, are $k$-dimensional submanifolds of class $C^1$ and $\mathcal{H}^{k}(N_0)=0$. For a vector field $X\in C_c^1(\mathbb R^n;\mathbb R^n)$ we can define the tangential divergence $\div_MX$ of $X$ by setting
\[
\div_MX(x):= \div_{N_j}X(x)\]
for $x\in N_j$, which is well-defined $\mathcal H^k$-a.e. on $M$. Here $\div_{N_j}X(x) = \sum_{i=1}^k\tau_i\cdot DX(x)\tau_i$, where $\{\tau_i\}_{i=1,...,k}$ is an orthonormal basis of the tangent plane $T_xN_j$ of $N_j$ at the point $x$. 

For the purpose of this article we use the following pragmatic definition of rectifiable $k$-varifolds, which usually has to be deduced from the definition (we refer to \cite{Simon:1983} for details):\\
A \emph{rectifiable $k$-varifold} $\mu$ in $\Omega$ is a Radon measure on $\Omega$ such that
\[ \mu = \theta \mathcal H^k \llcorner M,\]
where $M$ is a countably $k$-rectifiable set and where the \emph{multiplicity function} $\theta \in L_{loc}^1(\mathcal H^k \llcorner M)$ is such that $\theta >0$ $\mathcal H^k$-a.e. on $M$.

The \emph{first variation} $\delta \mu$ of $\mu$ with respect to $X\in C^1_c(\Omega,\mathbb R^{n})$ is given by
\[
\delta \mu(X) := \int_M \div_MX\,d\mu,\]
which by \cite[\S 16]{Simon:1983} is equal to $ \frac{d}{dt}(\phi_t{}_\sharp\mu)(\Omega) |_{t=0} $. Here $\phi_t{}_\sharp\mu$ denotes the \emph{image varifold} given by $\phi_t{}_\sharp\mu: = (\theta\circ\phi_t^{-1}) \mathcal H^k \llcorner \phi_t(M)$, and where $\{\phi_t\}$ denotes the flow of $X$.

We say that $\mu$ has \emph{generalized mean curvature} $\vec H$ in $\Omega$ if 
\begin{equation}\label{genH}
\delta \mu(X)=\int_M\div_MX\,d\mu = - \int_M \vec H \cdot X\,d\mu\quad\text{for all $X\in C^1_c(\Omega;\mathbb R^n)$},
\end{equation}
where $\vec H$ is a locally $\mu$-integrable function on $M\cap \Omega$ with values in $\mathbb R^{n}$. 
We remark that using the Riesz representation theorem such an $\vec H$ exists if the total variation $\|\delta \mu \|$ is a Radon measure in $\Omega$ and moreover $\|\delta V\|$ is absolutely continuous with respect to $\mu$ (see \cite{Simon:1983} for details).

We make the trivial but important remark that a rectifiable $k$-varifold $\mu$ in $\Omega$ that has finite total mass $\mu(\Omega)$ naturally defines a rectifiable $k$-varifold in $\mathbb R^n$.

A fundamental result in the theory of varifolds is the following regularity theorem due to Allard \cite{Allard:1972} (see also \cite[Chapter 5]{Simon:1983} for a more accessible approach) that holds for rectifiable $k$-varifolds $\mu$ in $\Omega \subset \mathbb R^n$. We use the following hypotheses.
\begin{equation}\label{hyp}
\left.\begin{split} 1\leq \theta\,\,\, \mu\text{-a.e. ,  }0\in\spt(\mu)\,\,,&\,B_\rho(0)\subset \Omega\\
\alpha_k^{-1}\rho^{-k}\mu(B_\rho(0))\leq & 1+\delta\\
\left(\int_{B_\rho(0)}|\vec H|^p\, d\mu\right)^\frac{1}{p}\rho^{1-\frac{k}{p}} &\leq \delta .
\end{split}\,\,\,\,\right\}\tag{h}
\end{equation}

\begin{theorem}[Allard's Regularity Theorem]\label{thm:Allard} 
For $p>k$, there exist $\delta=\delta(n,k,p)$ and $\gamma=\gamma(n,k,p)$ $\in (0,1)$ such that if $\mu$ is a rectifiable $k$-varifold in $\Omega$ that has generalized mean curvature $\vec H$ in $\Omega$ (see \eqref{genH}) and satisfies hypotheses \eqref{hyp},
then ${\rm spt}(\mu)\cap B_{\gamma\rho}(0)$ is a graph of a $C^{1, 1-\frac{k}{p}}$ function with scaling invariant $C^{1, 1-\frac{k}{p}}$~estimates depending only on $n,k,p,\delta$.
\end{theorem}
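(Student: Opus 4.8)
The plan is to carry out Allard's original argument \cite{Allard:1972}; a streamlined account is in \cite[Chapter~5]{Simon:1983}. I indicate the main steps only. After a translation and a dilation one reduces to the case $0\in\spt(\mu)$, $\rho=1$; localizing to $B_1(0)\subset\subset\Omega$ and invoking the remark preceding the statement, one may treat $\mu$ as a rectifiable $k$-varifold of finite mass in $\mathbb R^n$. \textbf{Step 1 (monotonicity and flatness).} Insert into \eqref{genH} radial vector fields $X(z)=\chi(|z-y|)(z-y)$, $\chi\in C^1_c$, and estimate $\int\vec H\cdot X\,d\mu$ by H\"older's inequality — this is where $p>k$ is essential. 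One obtains the monotonicity formula: $\sigma\mapsto\alpha_k^{-1}\sigma^{-k}\mu(B_\sigma(y))$ is monotone up to an error of size $O(\|\vec H\|_{L^p(\mu)}\,\sigma^{1-k/p})$, and the same monotone quantity also controls, in an $L^2$ sense, the deviation of $\mu$ from a cone with vertex at the center. For $\delta$ small this yields: (i) the density $\Theta^k(\mu,\cdot)$ exists everywhere, is upper semicontinuous, satisfies $\Theta^k(\mu,\cdot)\ge1$ on $\spt(\mu)$ (since $\theta\ge1$) and $\Theta^k(\mu,0)\le1+c\,\delta$, so that $\theta\equiv1$ $\mu$-a.e.\ near $0$; (ii) the conical deviation is small and, the density at $0$ being nearly $1$, $\mu$ is $L^2$-close on $B_{1/2}(0)$ to a $k$-plane $T_0$ through $0$; in particular the \emph{height excess} $\mathbf h(0,\tfrac{1}{2},T_0)$ is small, where $\mathbf h(y,\sigma,T):=\sigma^{-k-2}\int_{B_\sigma(y)}\dist(z-y,T)^2\,d\mu(z)$.

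\textbf{Step 2 (Caccioppoli inequality and Lipschitz approximation).} Introduce the \emph{tilt excess} $\mathbf t(y,\sigma,T):=\sigma^{-k}\int_{B_\sigma(y)}\|\proj_{T_zM}-\proj_T\|^2\,d\mu(z)$. Testing \eqref{genH} with $X(z)=\zeta(z)^2\,\proj_{T^\perp}(z-y)$, $\zeta$ a cutoff, yields the Caccioppoli-type inequality
\[
\mathbf t(y,\tfrac{\sigma}{2},T)\le C\Bigl(\mathbf h(y,\sigma,T)+\sigma^{2-k}\!\int_{B_\sigma(y)}|\vec H|^2\,d\mu\Bigr)
\]
(directly for $p\ge2$; the case $k<p<2$ requires minor modifications). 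Combining this with the monotonicity formula and a Besicovitch covering argument over the set where either the density ratio or the tilt excess exceeds a small threshold, one shows: there is $\eps_0=\eps_0(n,k,p)>0$ such that if $\mathbf h(0,1,T)$ and $\|\vec H\|_{L^p(\mu)}^2$ are both $\le\eps_0$ (which by Step 1 holds for $T=T_0$ when $\delta$ is small), then $\spt(\mu)\cap B_{1/2}(0)$ lies, outside a ``bad set'' of small $\mu$-measure whose projection onto $T$ has small $\mathcal H^k$-measure, on the graph of a Lipschitz map $u\colon T\cap B_{1/2}(0)\to T^\perp$ with $\Lip(u)$ small, and $\mathcal H^k\bigl((\graph u\,\triangle\,\spt(\mu))\cap B_{1/2}(0)\bigr)$ is small.

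\textbf{Step 3 (harmonic approximation and excess decay).} Testing \eqref{genH} with vector fields tangent to $T$ and using the Lipschitz approximation to trade integrals over $M$ for integrals over $\graph u$ up to errors controlled by the excess and by $\|\vec H\|_{L^p(\mu)}$, one finds $u$ to be approximately weakly harmonic; the harmonic approximation lemma then produces a harmonic $v$ on $T\cap B_{1/4}(0)$ that is $L^2$-close to $u$. The interior estimates for $v$ give $\sup_{B_{\theta r}}|v-\ell_{\theta r}|\le C\,\theta^2\,\sup_{B_r}|v-\ell_r|$ for its best affine approximations $\ell_\rho$, and transferring this back to $\mu$, with the new plane $T'$ read off from $\ell_{\theta\sigma}$, yields the excess-decay estimate
\[
\mathbf h(y,\theta\sigma,T')\le C\,\theta^2\,\mathbf h(y,\sigma,T)+C(\theta)\,\|\vec H\|_{L^p(\mu)}^2\,\sigma^{2\alpha},\qquad\alpha:=1-\tfrac{k}{p},
\]
which — using Step 2 at $0$ to verify the hypotheses at every $y\in\spt(\mu)$ near $0$ and every small scale $\sigma$ — holds uniformly there. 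Fixing $\theta$ so small that $C\theta^2\le\theta^{2\alpha}$ and iterating in the standard Campanato manner gives $\mathbf h(y,\sigma,T_{y,\sigma})\le C\,\sigma^{2\alpha}$ and $\|\proj_{T_{y,\sigma}}-\proj_{T_{y,\tau}}\|\le C\,\tau^{\alpha}$ for $0<\sigma\le\tau$ small, with constants depending only on $n,k,p,\delta$.

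\textbf{Step 4 (conclusion) and the main obstacle.} The uniform decay just obtained identifies $\spt(\mu)\cap B_{\gamma}(0)$, for a suitable $\gamma=\gamma(n,k,p)\in(0,1)$, as the graph over a single $k$-plane of a function with $\alpha$-H\"older continuous gradient obeying the asserted scale-invariant estimates, and of multiplicity $1$ by Step 1. I expect the main obstacle to be Step 2: the Caccioppoli inequality for the tilt excess and, above all, the Lipschitz approximation together with the measure estimate on the bad set. Both rely on the monotonicity formula and on the \emph{subcritical} scaling of the mean-curvature term afforded by the hypothesis $p>k$ — which is also what produces precisely the H\"older exponent $1-k/p$. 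Once these are available, Step 3 is a by-now-standard variational version of the De~Giorgi--Almgren harmonic-approximation / excess-decay scheme.
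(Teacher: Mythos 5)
The paper does not prove this theorem; it states it as a known result with references to Allard \cite{Allard:1972} and Simon \cite[Chapter 5]{Simon:1983}. Your sketch faithfully reproduces exactly that cited argument (monotonicity and flatness, tilt-excess Caccioppoli inequality and Lipschitz approximation, harmonic approximation and Campanato-style excess decay yielding the exponent $1-\tfrac{k}{p}$), so it coincides with the approach the paper appeals to.
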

\begin{remark}
More precisely, 
there is a linear isometry $q$ of $\mathbb{R}^{n}$ and a function $u \in C^{1,1-\frac{k}{p} }(B_{\gamma r}^k(0);\mathbb R^{n-k})$
with $u(0) = 0$, $\textrm{spt}(\mu) \cap B_{\gamma \rho}(0) = q(\textrm{graph}(u)) \cap B_{\gamma \rho} (0)$, and
\begin{equation}\label{estimates}
 \rho^{-1} \sup_{B_{\gamma \rho}^k(0)}|u| + \sup_{B_{\gamma \rho}^k(0)} |Du| + \rho^{1-\frac{k}{p} } \sup_{\substack{x,y \in B_{\gamma \rho}^k (0)\\ x \neq y}} |x-y|^{-(1- \frac{k}{p})} |Du(x)-Du(y)| \leq c(n,k,p)\delta^{1/4k}.
\end{equation}
\end{remark}

\subsection{Sets of finite perimeter}
Let $E \subset \Omega$ be a Borel set. We say that
$E$ has finite perimeter $P(E,\Omega)$ in $\Omega$ if 
\[
P(E,\Omega) :=\sup_{\substack{X \in C_c^1(\Omega;\mathbb{R}^n)\\ |X | \leq 1}} \int_{E} {\rm div} X\,dx  < \infty.
\]
The Riesz representation theorem implies the existence of a Radon measure $\mu_E$ on $\Omega$ and a $\mu_E$-measurable vector field $\eta_E:\Omega \to \mathbb R^n$ with $|\eta_E|=1$ $\mu_E$-a.e. such that
\[ \int_E \div X\,dx = \int_{\mathbb{R}^n} X\cdot \eta_E \,d\mu_E \quad\textrm{for all } X \in C_c^1(\Omega;\mathbb{R}^n).\]
The vector valued measure $\vec \mu_E := \eta_E\,\mu_E$ is sometimes referred to as the Gauss-Green measure of $E$ (with respect to $\Omega$). For the total perimeter of the set $E$ in $\Omega$ we have
\[ P(E,\Omega) = \mu_E(\Omega).\]

\noindent In the case that $\partial E\cap \Omega$ is of class $C^1$, we have
\[
 \vec \mu_E = \nu_E \mathcal{H}^{n-1} \llcorner (\partial E\cap\Omega)\quad\text{and}\quad P(E,\Omega) = \mathcal{H}^{n-1}(\partial E\cap \Omega).
\]
In particular, we have for every point $x\in \partial E\cap \Omega$
\begin{equation}\label{LimitNormal}
\nu_E(x) = \lim_{r \to 0} \dashint_{ \partial E \cap B_r(x)} \nu_E \,d\mathcal{H}^{n-1} = \lim_{r \to 0} \frac{\vec \mu_E(B_r(x))}{\mu_E(B_r(x))}.
\end{equation}
For a generic set $E$ of finite perimeter, the \emph{reduced boundary} $\partial_\Omega^*E $ of $E$ in $\Omega$ is defined as those $x \in \partial E \cap \Omega$ such that the above limit on the right hand side exists and has norm $1$. The Lebesgue-Besicovitch differentiation theorem implies that $\mu_E(\mathbb R^n\setminus \partial_\Omega^*E)=0$.
The vector field $\nu_E \in L^1(\mu_E; \mathbb R^{n})$ \emph{defined} by the equation \eqref{LimitNormal} on $\partial_\Omega^*E$ (and set to $0$ elsewhere), is called the \emph{measure theoretic outer unit normal} of $E$. 
For more details on sets of finite perimeter we refer to \cite{Giusti:1984,Simon:1983,Evans-Gariepy:1992}.

\begin{theorem}[De Giorgi's structure theorem] \label{thm:DeGiorgi}
 Suppose $E$ has finite perimeter in $\Omega$. Then $\partial_\Omega^*E$ is countably $(n-1)$-rectifiable. 
 In addition for all $x \in \partial_\Omega^*E$
\begin{equation}
\Theta(\mu_E,x) := \lim_{r \to 0} \frac{\mu_E(B_r(x))}{\alpha_{n-1} r^{n-1}} = 1,
\end{equation}
where $\alpha_{n-1}$ is the volume of the unit ball in $\mathbb{R}^{n-1}$. (i.e. the limit exists and is equal to $1$.) 
Moreover, $\mu_E = \mathcal{H}^{n-1}\llcorner \partial_\Omega^*E$. 
 \end{theorem}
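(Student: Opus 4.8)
\textbf{Proof proposal for Theorem~\ref{thm:DeGiorgi} (De Giorgi's structure theorem).}

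The plan is to follow De Giorgi's original blow-up argument, which simultaneously establishes rectifiability, the density $1$ identity, and the representation $\mu_E = \mathcal H^{n-1}\llcorner\partial_\Omega^*E$. First I would record the elementary consequences of $E$ having finite perimeter that feed the argument: the relative isoperimetric inequality, and the fact that for $\mu_E$-a.e.\ $x$ the difference quotients $\vec\mu_E(B_r(x))/\mu_E(B_r(x))$ converge to a unit vector (this is precisely the Besicovitch differentiation theorem applied to the vector measure $\vec\mu_E$ against the scalar measure $\mu_E$, and it gives $\mu_E(\mathbb R^n\setminus\partial_\Omega^*E)=0$, as already noted in the excerpt). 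Fix $x\in\partial_\Omega^*E$; after a translation and rotation assume $x=0$ and $\nu_E(0)=-e_n$. The central step is the blow-up: consider the rescaled sets $E_r := r^{-1}E$ and show that as $r\to 0$ they converge in $L^1_{loc}$ to the half-space $H:=\{y: y_n<0\}$, with the associated perimeter measures converging weakly-$*$, $\mu_{E_r}\rightharpoonup \mathcal H^{n-1}\llcorner\{y_n=0\}$.

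To carry out the blow-up I would (i) establish upper and lower perimeter density bounds at $x$ of the form $c\,r^{n-1}\le \mu_E(B_r(x))\le C\,r^{n-1}$, uniform for small $r$, using the divergence theorem together with the relative isoperimetric inequality and the defining property of $\partial_\Omega^*E$ (the normals nearly align at small scales, so $E$ occupies a definite fraction of each small ball); these give compactness of the family $\{E_r\}$ in $BV_{loc}$ and prevent the limit from being trivial. (ii) Pass to a subsequential limit $E_0$ with $\chi_{E_r}\to\chi_{E_0}$ in $L^1_{loc}$; lower semicontinuity of perimeter and the fact that the rescaled Gauss--Green measures $\vec\mu_{E_r}$ have normals converging in an integrated sense to the constant $-e_n$ force $\vec\mu_{E_0} = -e_n\,\mu_{E_0}$, i.e.\ $\chi_{E_0}$ has distributional gradient pointing in a single fixed direction. (iii) Conclude that $E_0$ is (up to a null set) a half-space orthogonal to $e_n$, hence $E_0 = H$; since the limit is independent of the subsequence, the full limit exists. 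A short additional argument upgrades weak convergence of $\mu_{E_r}$ to the statement that $\mu_{E_r}(B_1)\to \alpha_{n-1}$, which after unscaling is exactly $\Theta(\mu_E,x)=1$; the same monotone-type control gives that $\mu_E$ has $(n-1)$-dimensional density $1$ at $\mu_E$-a.e.\ point and density $0$ at $\mathcal H^{n-1}$-a.e.\ point outside $\partial_\Omega^*E$, which by the density comparison principle for Radon measures yields $\mu_E = \mathcal H^{n-1}\llcorner\partial_\Omega^*E$.

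For rectifiability I would argue that the blow-up at each $x\in\partial_\Omega^*E$ being a hyperplane, together with the uniform density bounds, shows that near each such point the blow-ups are flat; one then invokes a Lipschitz-approximation / covering argument (as in \cite{Giusti:1984} or \cite{Maggi:2012}): the set where blow-ups are flat with a fixed normal is covered, up to $\mathcal H^{n-1}$-null sets, by countably many $C^1$ (indeed Lipschitz) graphs, which is precisely countable $(n-1)$-rectifiability of $\partial_\Omega^*E$. The main obstacle is step (ii)--(iii), the rigidity of the blow-up limit: one must show that a set of locally finite perimeter whose Gauss--Green measure has a constant-direction normal is a half-space. This is handled by slicing $E_0$ with lines parallel to $e_n$ and using that each slice is, up to a null set, a half-line (a one-dimensional $BV$ function whose derivative is a single positive/negative Dirac-type measure), combined with the $\mu_{E_0}$ being a multiple of $\mathcal H^{n-1}\llcorner\{y_n=0\}$; the delicate point throughout is that one only controls the \emph{average} of the normal in small balls, so the compactness and the identification of the limit must be done without pointwise information on $\nu_E$. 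Alternatively, and more cleanly, one can cite this as the structure theorem of De Giorgi and Federer exactly as it appears in \cite{Giusti:1984, Evans-Gariepy:1992, Simon:1983}, since the statement here is standard; I would include the blow-up sketch above for the reader's convenience and refer to those texts for the technical Lipschitz-approximation estimates.
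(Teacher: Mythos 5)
Your sketch is a faithful outline of De Giorgi's blow-up argument, and the steps you list (density bounds via the relative isoperimetric inequality, $BV_{loc}$ compactness of the rescalings, identification of the half-space limit by constancy of the averaged normal and one-dimensional slicing, Lipschitz-graph covering for rectifiability) are exactly the standard route. The paper, however, does not prove this theorem at all: Theorem~\ref{thm:DeGiorgi} is presented as background from geometric measure theory and is simply cited from \cite{Giusti:1984,Simon:1983,Evans-Gariepy:1992}, which is precisely the alternative you yourself suggest in your final sentence. So there is no gap, but also nothing to compare against in the paper beyond the citation; the additional sketch you give is accurate and would serve well as reader-facing exposition.
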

\begin{remark}\label{rmk:DeGiorgi}
De Giorgi's structure theorem in particular shows that every set $E$ of finite perimeter defines - through its generalized surface measure $\mu_E$ - a rectifiable $(n-1)$-varifold of multiplicity $\theta \equiv 1$ on $\partial_\Omega^*E$.
\end{remark}
Let $E\subset \Omega$ be of finite perimeter in $\Omega$. If $\Omega$ is Lipschitz regular, one can define the (inner) \emph{trace} $\chi_E^+ \in L^1(\mathcal H^{n-1}\llcorner \partial \Omega)$ of $\chi_E$ on $\partial \Omega$. For details we refer to \cite[Chapter 5.3]{Evans-Gariepy:1992}. For every vector field $X \in C_c^1(\mathbb R^n;\mathbb R^n)$ we have
\[
\int_E \div X\,dx = \int_{\partial_\Omega^*E}X\cdot\nu_E\,d\mathcal H^{n-1} +  \int_{\partial\Omega} X\cdot\nu_\Omega\,\chi_E^+\,d\mathcal H^{n-1}.
\]
This implies that $E$ is also a set of finite perimeter as a subset of $\mathbb R^n$ with
\[ P(E,\mathbb R^n) = P(E,\Omega)+ \int_{\partial\Omega} |\chi_E^+|\,d\mathcal H^{n-1}.\]
As a finite perimeter set in $\mathbb R^n$, $E$ also has a Gauss-Green measure which we shall denote by $ \vec \mu_{E}^*$. Obviously $ \vec \mu_{E}^* \llcorner \Omega =  \vec \mu_E$ and $\partial^*E \cap \Omega = \partial_\Omega^*E$.
\\\\

Since sets of finite perimeter are equivalence classes of sets, one needs to choose a good representative in order to talk about their regularity properties. W.l.o.g. (see \cite[Proposition 3.1]{Giusti:1984} for details) we will always assume that any finite perimeter set $E$ at hand satisfies the following properties:
\begin{align}\label{representative} 
(a)\quad& E \text{  is Borel} \nonumber \\
(b)\quad& 0 < \vert  E \cap B_\rho(x) \vert < \vert  B_\rho(x) \vert\;\;\text{for all}\;\;x \in \partial  E\;\;\text{and all}\;\;\rho > 0 \\
(c)\quad& \overline{\partial_\Omega^*E} = \partial E\cap \overline \Omega \text{ which implies that ${\rm spt}(\mu_E)=\partial E \cap \overline \Omega.$} \nonumber 
\end{align}

\subsection{The first variation of perimeter}
Let $X \in C_c^{1}(\Omega; \mathbb{R}^n)$ with corresponding flow $\{\phi_t\}$.
The \emph{first variation of perimeter} is then easily computed as (see  \cite{Giusti:1984,Maggi:2012})
\begin{equation}\label{pervar}
\frac{d}{dt}\Big|_{t=0}P(\phi_t(E),\Omega) = \int_{\partial_\Omega^*E} \textrm{div}_{E} X \,d\mathcal{H}^{n-1},
\end{equation}
where $\textrm{div}_E X$ is the tangential divergence of the vector field $X$ with respect to $E$:
\[ \textrm{div}_E X = \textrm{div} X - \nu_E\cdot DX\nu_E,\]
which obviously agrees with the definition of tangential divergence with respect to $\partial_\Omega^*E$. Hence, the expression \eqref{pervar} equals the first variation $\delta \mu_E(X)$ of the varifold $\mu_E$ with respect to $X$.

In order to investigate the behavior of stationary points $E$ of $\mathcal E_\gamma$ at the boundary $\partial \Omega$ of $\Omega$ we need to allow for more general variations (as already appearing in Definition~\ref{defcp}). By the regularity assumption on $\partial \Omega$ it follows (cf. \cite{Grueter:1987}) that $\phi_t(\Omega)\equiv \Omega$ (and $\phi_t(\partial \Omega)\equiv \partial \Omega$) for the flow $\{\phi_t\}$ of any vector field $X\in C_c^1(\mathbb R^n;\mathbb R^n)$ such that $X\cdot \nu_\Omega =0$ on $\partial \Omega$.
Since $P(\phi_t(E),\Omega)\equiv (\phi_t{}_\sharp\mu_E)(\mathbb R^n)$
we see that the formula \eqref{pervar} still holds for such vector fields $X$.

\section{Preliminary results}\label{sec:prelem}

\begin{proposition}[First variation of nonlocal perimeter]\label{1st variation of perimeter}
Let $E \in \mathcal A$ and let $X\in C_c^1(\mathbb R^n;\mathbb R^n)$ with $X\cdot \nu_\Omega =0$ on $\partial \Omega$ be a vector field with corresponding flow $\{\phi_t\}$.
Then
\begin{align*}
\frac{d}{dt}\mathcal E_\gamma(\phi_t(E)) \Big|_{t=0} =\int_{ \partial_\Omega^*E}\div_E X\,d\mathcal{H}^{n-1} +2\gamma\int_{\partial_\Omega^*E}  \phi_E X \cdot \nu_E \,d\mathcal{H}^{n-1} + \int_{\partial_\Omega^*E} f X \cdot \nu_E \,d\mathcal{H}^{n-1}.
\end{align*}
\end{proposition}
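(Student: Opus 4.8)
The plan is to differentiate the three terms of $\mathcal E_\gamma$ in \eqref{nonloceqn1} separately. The perimeter term needs no new work: since $X\cdot\nu_\Omega=0$ on $\partial\Omega$, formula \eqref{pervar} together with the remark following it (which extends it to such vector fields using $\phi_t(\Omega)\equiv\Omega$) gives $\frac{d}{dt}\big|_{t=0}P(\phi_t(E),\Omega)=\int_{\partial_\Omega^*E}\div_E X\,d\mathcal H^{n-1}$. It therefore remains to compute the derivatives of the nonlocal term $\mathcal N(E):=\int_E\int_E G(x,y)\,dy\,dx$ and the forcing term $\mathcal F(E):=\int_E f\,dx$.

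For both of these I would pull the integrals back to the fixed domain $E$ through the diffeomorphism $\phi_t$. Writing $J_t(\xi):=\det D\phi_t(\xi)$ we have $\mathcal F(\phi_t(E))=\int_E f(\phi_t(\xi))\,J_t(\xi)\,d\xi$ and $\mathcal N(\phi_t(E))=\int_E\int_E G(\phi_t(\xi),\phi_t(\eta))\,J_t(\xi)\,J_t(\eta)\,d\eta\,d\xi$. Using $\phi_0=\id$, $\partial_t\phi_t|_{t=0}=X$, and the standard identity $\partial_t J_t|_{t=0}=\div X$, and differentiating under the integral sign, one finds $\frac{d}{dt}\big|_{t=0}\mathcal F(\phi_t(E))=\int_E(\nabla f\cdot X+f\,\div X)\,d\xi=\int_E\div(fX)\,d\xi$, while for $\mathcal N$, after using the symmetry $G(x,y)=G(y,x)$ to collapse the two equal contributions coming from moving $\xi$ and moving $\eta$,
\[
\frac{d}{dt}\Big|_{t=0}\mathcal N(\phi_t(E))=2\int_E\Big[\Big(\int_E \nabla_x G(\xi,\eta)\,d\eta\Big)\cdot X(\xi)+\Big(\int_E G(\xi,\eta)\,d\eta\Big)\,\div X(\xi)\Big]\,d\xi=2\int_E\div(\phi_E X)\,d\xi,
\]
where in the last equality I invoke the standard fact that $\nabla\phi_E(\xi)=\int_E\nabla_x G(\xi,\eta)\,d\eta$ (legitimate because $|\nabla_x G(x,y)|\lesssim|x-y|^{1-n}$, resp. $\lesssim|x-y|^{-\beta-1}$ in the $\Gamma_\beta$-case, is locally integrable, consistent with $\phi_E\in C^{1,\alpha}_{loc}(\overline\Omega)$).

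To finish, I would apply the Gauss-Green formula for sets of finite perimeter. Since $f\in C^2_{loc}(\overline\Omega)$ and $\phi_E\in C^{1,\alpha}_{loc}(\overline\Omega)$, both $fX$ and $\phi_E X$ belong to $C^1_c(\mathbb R^n;\mathbb R^n)$ and inherit the condition $(fX)\cdot\nu_\Omega=(\phi_E X)\cdot\nu_\Omega=0$ on $\partial\Omega$ from $X$; hence the boundary contribution over $\partial\Omega$ vanishes and $\int_E\div(fX)\,d\xi=\int_{\partial_\Omega^*E}fX\cdot\nu_E\,d\mathcal H^{n-1}$, $\int_E\div(\phi_E X)\,d\xi=\int_{\partial_\Omega^*E}\phi_E X\cdot\nu_E\,d\mathcal H^{n-1}$. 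Adding the perimeter, nonlocal ($\times\,\gamma$), and forcing contributions gives exactly the asserted identity.

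The one point that requires genuine care is justifying the differentiation under the integral sign for $\mathcal N$ across the diagonal $\{x=y\}$, where $G$ is singular. Here I would use that $\phi_t$ is bi-Lipschitz with constants tending to $1$ as $t\to 0$, so $|\phi_t(\xi)-\phi_t(\eta)|$ is comparable to $|\xi-\eta|$ uniformly for small $|t|$; the integrand and its $t$-derivative are then dominated, uniformly in $t$, by a fixed function in $L^1(E\times E)$ (coming from the local integrability of $G$, $\nabla_x G$, and the corrector $R$, the latter being smooth by elliptic regularity), so dominated convergence applies. Everything else is routine.
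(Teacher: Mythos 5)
Your proof is correct, and it takes a genuinely different and noticeably cleaner route than the paper's. After change of variables, differentiation under the integral, and symmetrization, both you and the paper arrive at $2\int_E\int_E\bigl[\nabla_x G(\xi,\eta)\cdot X(\xi)+G(\xi,\eta)\,\div X(\xi)\bigr]\,d\eta\,d\xi$. At this point the paper keeps the iterated integral in the order ``$d\xi$ first'' and tries to apply the divergence theorem to $\int_E\div\bigl(G(\cdot,\eta)X\bigr)\,d\xi$ for each fixed $\eta$; since $G(\cdot,\eta)X$ is not $C^1$ near $\xi=\eta$ this cannot be done directly, and they instead approximate $E$ by smooth sets $E^i$, excise a small ball $B_\rho(\eta)$, run the divergence theorem on $E^i\setminus B_\rho(\eta)$, estimate the two error terms, and pass to the limits $\rho\to 0$, $i\to\infty$ (splitting $G=\Gamma+R$ so that the smooth corrector $R$ is handled separately). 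You instead perform the $\eta$-integration \emph{first}, recognize $\phi_E=\int_E G(\cdot,\eta)\,d\eta$ and $\nabla\phi_E=\int_E\nabla_x G(\cdot,\eta)\,d\eta$ (a classical fact for Newtonian-type potentials, consistent with the paper's remark that $\phi_E\in C^{1,\alpha}_{loc}(\overline\Omega)$), and reduce everything to $2\int_E\div(\phi_E X)\,d\xi$ with $\phi_E X\in C^1_c$. Then the generalized Gauss--Green formula for sets of finite perimeter applies verbatim, and the boundary term over $\partial\Omega$ vanishes because $X\cdot\nu_\Omega=0$. This side-steps the approximation machinery entirely; the only price is that you lean on the regularity of the potential $\phi_E$, whereas the paper's excision argument would also prove a Gauss--Green identity for the singular integrand itself, which is of some independent interest (they reference \cite{Chen-Torres-Ziemer:2009} for exactly this). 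Two small points to tighten if you were to write this out: (i) $f$ and $\phi_E$ are only defined up to $\overline\Omega$, so to literally invoke $fX,\phi_E X\in C^1_c(\mathbb R^n;\mathbb R^n)$ you should note a $C^1$ extension across $\partial\Omega$ exists because $\Omega$ is $C^2$ and only the values on $\overline\Omega$ enter the Gauss--Green formula; (ii) your sketch of domination for the $t$-differentiation under the integral via the bi-Lipschitz bound $|\phi_t(\xi)-\phi_t(\eta)|\gtrsim|\xi-\eta|$ is exactly the right ingredient and should be stated for the $\log$-case $n=2$ with a local bound as well.
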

\begin{proof}
The first variation of perimeter is equation \eqref{pervar}. It remains to compute the first variation of the nonlocal term; the computation of the first variation
of the third term is similar but easier. By the change of variables formula it holds that
\begin{align}\label{G1}
 \int_{\phi_t(E)}\int_{\phi_t(E)}G(x,y)\,dx\,dy = \int_{E}\int_{E}G(\phi_t(x),\phi_t(y))\,\vert\det D\phi_t(x)\vert\vert\det D\phi_t(y)\vert\,dx\,dy .
\end{align}
Hence, we compute using \eqref{G1} and the assumptions on $G$ which allow us to differentiate under the integral
\begin{align}\label{1stvarcomp}
\frac{d}{dt}\Big|_{t=0} &\int_{\phi_t(E)}\int_{\phi_t(E)}G(x,y)\,dx\,dy\nonumber\\
& =2\int_{E}\int_{E} (\nabla_xG)(x,y) \cdot X(x)dx\,dy  +  2\int_{E}\int_{E} G(x,y)\,\text{div}X(x) \,dx\,dy \nonumber \\
& =2\int_{E}\int_{E} \div ( G(\cdot,y)X)(x) \,dx\,dy \nonumber\\
& =2\int_{E}\int_{E} \div ( \Gamma(\cdot,y)X)(x) \,dx\,dy +2\int_{E}\int_{\partial_\Omega^*E} R(\cdot,y)\,X\cdot\nu_E \,d\mu_E\,dy .
\end{align}
We cannot directly apply the divergence theorem to the first term of \eqref{1stvarcomp} since $\partial_\Omega^*E$ is only
$(n-1)$-rectifiable and $G(\cdot,y)$ is not of class $C^1$ near $x=y$. We can get around this technical obstacle by applying the results of \cite{Chen-Torres-Ziemer:2009}, but we present a simple argument which suffices in our case. Since $\Omega$ is of class $C^2$ (for this argument Lipschitz is enough) we have (see Section~\ref{sec:prelem}) that $E$ is a set of finite perimeter in $\mathbb R^n$. 
By \cite[Theorem 1.24]{Giusti:1984} we can approximate $E$ in the support
of $X$ by smooth sets $E^{i}$ such that
\begin{align*}
\chi_{E^{i}} &\to \chi_E \textrm{ in } L^1_{loc}(\mathbb R^n)\textrm{ and } \\
\vec\mu_{E^{i}}^* &\to \vec\mu_E^* \textrm{ weakly as Radon measures on $\mathbb R^n$}.
\end{align*}
We may apply
the Lebesgue dominated convergence theorem to conclude that
\begin{align}\label{dominated}
\int_{E}\int_{E} \div ( \Gamma(\cdot,y)X)(x) \,dx\,dy= \lim_{i \to \infty} \int_{E}\int_{E^i} \div ( \Gamma(\cdot,y)X)(x) \,dx\,dy.
\end{align}
Moreover, we have
\begin{align}\label{dominated1}
\int_{E^{i}} \div ( \Gamma(\cdot,y)X)(x) \,dx =\lim_{\rho \to 0} \int_{E^{i} \setminus  B_{\rho}(y)} \div ( \Gamma(\cdot,y)X)(x) \,dx.
\end{align}
We may now apply the divergence theorem and we have for a.e. $0< \rho < 1$
\begin{align}\label{dominated2}
\int_{E^{i} \backslash B_{\rho}(y)} \div ( \Gamma(\cdot,y)X)(x) \,dx 
&= \int_{\partial E^i \setminus B_{\rho}(y)} \Gamma(\cdot,y) \,X \cdot \nu_{E^{i}}\,d\mathcal H^{n-1} \nonumber\\
&\quad- \int_{\partial B_{\rho}(y) \cap E^{i}} \Gamma(\cdot,y) \,X \cdot \nu_{B_{\rho}(y)} \,d\mathcal H^{n-1}.
\end{align}
The second term on the right hand side of \eqref{dominated2} can be estimated by
$c(n) \sup |X|\,\rho^{1-\varepsilon}$ for some $\varepsilon \in [0,1)$, and hence goes to zero as $\rho \to 0$.
On the other hand, we have
\begin{align*}
&\left| \int_{\partial E^i \setminus B_\varrho(y)} \Gamma(\cdot,y)X \cdot\nu_{E^i}\,d\mathcal H^{n-1} - \int_{\partial E^i } \Gamma(\cdot,y)X \cdot\nu_{E^i}\,d\mathcal H^{n-1} \right| \\
& \quad \leq \mathcal H^{n-1}(\partial E^i  \cap B_\varrho(y) )^{1-\frac{1}{p} } \left( \int_{\partial E^i \cap {\rm spt}(X) } |\Gamma(\cdot,y)|^p\,d\mathcal H^{n-1} \right)^\frac{1}{p} \sup |X|  ,
\end{align*}
where $p\in (1, \frac{n-1}{n-2})$, in case $n\geq 3$, and $p \in (1,\beta^{-1})$ in case $G=\Gamma_\beta$. Whence, upon combining 
\eqref{dominated1} and \eqref{dominated2},
\[
\int_{E^{i}} \div ( \Gamma(\cdot,y)X)(x) \,dx=\int_{\partial E^i } \Gamma(\cdot,y) X \cdot \nu_{E_{i}} \,d\mathcal H^{n-1}.
\]
Using \eqref{1stvarcomp} and applying Fubini's theorem we arrive at
\begin{align}\label{dominated4}
\int_E \int_{E^{i}} \div ( G(\cdot,y)X)(x) \,dx \,dy = \int_{\partial E^i}  \phi_E X \cdot \nu_{E^{i}} \,d\mathcal H^{n-1}.
\end{align}
Now let $i \to \infty$, using the fact that $\phi_E$ is continuous and that $X$ has compact support, and combining \eqref{dominated} and \eqref{dominated4} we obtain
\begin{align*}
\int_{E}\int_{E} \div ( G(\cdot,y)X)(x) \,dx\,dy=  \int_{\partial^*E}  \phi_E X \cdot \nu_E \,d\mathcal{H}^{n-1}.
\end{align*}
The claim now follows from the fact that $X$ is tangential to $\partial \Omega$.
\end{proof}

\begin{lemma}\label{lagrange1}
There exists a vector field $Y \in C_c^1(\Omega;\mathbb R^n)$ such that $\int_{E}\div Y dx =1$.
\end{lemma}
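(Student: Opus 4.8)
The plan is to produce such a vector field by hand, using the fact that $E$ is a nontrivial finite perimeter set with $\mu_E(\Omega) = P(E,\Omega) > 0$. First I would observe that it suffices to find \emph{any} $Y \in C_c^1(\Omega;\mathbb{R}^n)$ with $\int_E \div Y\,dx \neq 0$, since we may then rescale $Y$ by the reciprocal of that nonzero number. So the real content is: the linear functional $Y \mapsto \int_E \div Y\,dx$ on $C_c^1(\Omega;\mathbb{R}^n)$ is not identically zero.

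To see this, suppose for contradiction that $\int_E \div Y\,dx = 0$ for every $Y \in C_c^1(\Omega;\mathbb{R}^n)$. By the very definition of the Gauss-Green measure, $\int_E \div Y\,dx = \int_{\mathbb{R}^n} Y \cdot \eta_E\,d\mu_E$ for all such $Y$, so this would force $\vec\mu_E = \eta_E\,\mu_E \equiv 0$ as a vector-valued measure on $\Omega$, hence $\mu_E(\Omega) = P(E,\Omega) = 0$. But a finite perimeter set with zero perimeter in the connected open set $\Omega$ has, by the chosen representative \eqref{representative}, either $|E| = 0$ or $|E| = |\Omega|$, i.e.\ $E$ is trivial; in either case $\partial E \cap \Omega = \emptyset$ and $E \notin \mathcal{A}$ in any meaningful sense (and in the constrained class $\mathcal{A}_m$ with $m \in (0,|\Omega|)$ this is outright impossible). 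This contradiction yields the existence of $Y$ with $\int_E \div Y\,dx \neq 0$, and after normalization we get $\int_E \div Y\,dx = 1$.

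Alternatively, and perhaps more concretely, I would pick a point $x_0 \in \partial^*_\Omega E$ (which is nonempty since $P(E,\Omega)>0$) and a small ball $B_r(x_0) \subset\subset \Omega$; by De Giorgi's structure theorem the density $\Theta(\mu_E,x_0) = 1$, so $\vec\mu_E(B_r(x_0)) \neq 0$ for $r$ small. Choosing a cutoff $\varphi \in C_c^1(B_r(x_0))$ with $\varphi \equiv 1$ on $B_{r/2}(x_0)$ and a constant vector $v$ not orthogonal to $\nu_E(x_0)$, the field $Y_0 = \varphi v$ gives $\int_E \div Y_0\,dx = \int_{B_r(x_0)} v \cdot \eta_E\,d\mu_E$, which is close to $\mu_E(B_r(x_0))\, v\cdot\nu_E(x_0) \neq 0$ for $r$ small by \eqref{LimitNormal}; normalize to conclude.

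I do not expect any genuine obstacle here — this is a soft existence statement. The only point requiring a modicum of care is making sure the normalization is legitimate, i.e.\ that the functional is nonzero, which is exactly the nontriviality of $E$ as a finite perimeter set; this is guaranteed by the standing assumptions ($E \in \mathcal{A}$ with nonempty reduced boundary, or $|E| = m \in (0,|\Omega|)$ in the constrained case). One should also confirm $Y$ can be taken with support \emph{compactly contained in $\Omega$} rather than merely in $\mathbb{R}^n$, which is automatic in the second construction by choosing $r$ small enough that $B_r(x_0) \subset\subset \Omega$.
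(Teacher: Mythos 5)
Your first argument is essentially the same as the paper's: assume the functional $Y\mapsto\int_E\div Y\,dx$ vanishes identically on $C_c^1(\Omega;\mathbb R^n)$, deduce that $\chi_E$ is constant a.e.\ on the connected set $\Omega$ (the paper cites Du Bois--Reymond's lemma; you phrase it as $P(E,\Omega)=0$, which is the same fact), and contradict $0<|E|<|\Omega|$. The alternative constructive argument in your second paragraph, picking $x_0\in\partial^*_\Omega E$ and a localized cutoff, is a valid and more explicit variant but not what the paper does.
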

\begin{proof}
Assume by contradiction that for every vector field $X \in C_c^1(\Omega;\mathbb R^n)$: $\int_{E}\text{div}X dx =0$. Then by Du Bois-Reymond's lemma \cite{Giaquinta-Hildebrandt:1996-I} we conclude that 
$$\chi_E=0 \;\;\;\text{or}\;\;\; \chi_E=1 \;\;\;\text{$\mathcal L^n$-a.e. on $ \Omega$},$$
where we used that $\Omega$ is connected. Hence,
$$E = \Omega \;\;\;\text{or}\;\;\; E =\emptyset \;\;\;\text{in the measure theoretic sense}.$$
This contradicts the assumption that $0<\vert E  \vert < \vert \Omega \vert$, proving the claim.
\end{proof} 

\begin{proposition}[Euler-Lagrange equation of non-local perimeter] \label{firstvariation} Let $E$ be a stationary point of $\mathcal E_\gamma$ in $\mathcal{A}_m$ or $\mathcal{A}$. Then there exists a real number $\lambda$ such that $\mu_E$
has a generalized mean curvature vector
\begin{equation}
 \vec H = -(\lambda - 2 \gamma \phi_E - f)\nu_E
\end{equation}
and such that $\mu_E$ is weakly orthogonal to $\partial \Omega$.
That is, for every vector field $X \in C_c^1(\mathbb{R}^n;\mathbb{R}^n)$ with $X\cdot \nu_\Omega =0$ on $\partial\Omega$ the following variational equation is true:
\begin{equation}\label{guy} \int_{\partial_\Omega^*E} \textrm{\em div}_E X \,d\mathcal{H}^{n-1} = - \int_{\partial_\Omega^*E} \vec H \cdot X \,d\mathcal{H}^{n-1}.\end{equation}
For stationary points in $\mathcal{A}$, we have $\lambda = 0$. 
\end{proposition}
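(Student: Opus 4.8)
The plan is to extract everything from Proposition~\ref{1st variation of perimeter}. For $X\in C_c^1(\mathbb R^n;\mathbb R^n)$ with $X\cdot\nu_\Omega=0$ on $\partial\Omega$ abbreviate
\[
L(X):=\int_{\partial_\Omega^*E}\div_EX\,d\mathcal H^{n-1}+2\gamma\int_{\partial_\Omega^*E}\phi_E\,X\cdot\nu_E\,d\mathcal H^{n-1}+\int_{\partial_\Omega^*E}f\,X\cdot\nu_E\,d\mathcal H^{n-1},
\]
so that $X\mapsto L(X)$ is linear and, by Proposition~\ref{1st variation of perimeter}, $L(X)=\frac{d}{dt}\big|_{t=0}\mathcal E_\gamma(\phi_t(E))$ for the flow $\{\phi_t\}$ of $X$. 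If $E$ is stationary in $\mathcal A$ the hypothesis says precisely $L(X)=0$ for every such $X$; taking $\lambda:=0$, the identity $L(X)=\lambda\int_{\partial_\Omega^*E}X\cdot\nu_E\,d\mathcal H^{n-1}$ holds trivially, and rearranging it gives \eqref{guy} with $\vec H:=-(\lambda-2\gamma\phi_E-f)\nu_E$. Since $\phi_E\in C^{1,\alpha}_{loc}(\overline\Omega)$ and $f\in C^2_{loc}(\overline\Omega)$, this $\vec H$ is continuous, hence locally $\mu_E$-integrable; restricting to $X\in C_c^1(\Omega;\mathbb R^n)$ shows it is the generalized mean curvature of $\mu_E$ in the sense of \eqref{genH}, while keeping all tangential $X$ is exactly the asserted weak orthogonality.

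For $E$ stationary in $\mathcal A_m$ I would handle the volume constraint by the classical Lagrange-multiplier device. Fix the field $Y\in C_c^1(\Omega;\mathbb R^n)$ from Lemma~\ref{lagrange1}, so $\int_E\div Y\,dx=1$, and set $\lambda:=L(Y)$. Given any admissible $X$, write $c:=\int_E\div X\,dx$; since $X\cdot\nu_\Omega=0$ on $\partial\Omega$, the divergence theorem for sets of finite perimeter gives $c=\int_{\partial_\Omega^*E}X\cdot\nu_E\,d\mathcal H^{n-1}$, and the field $Z:=X-cY$ (again admissible) satisfies $\int_E\div Z\,dx=0$. It then suffices to show $L(Z)=0$, since by linearity this reads $L(X)=\lambda\int_{\partial_\Omega^*E}X\cdot\nu_E\,d\mathcal H^{n-1}$ and one concludes exactly as above.

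To obtain $L(Z)=0$ I would build a volume-preserving deformation of $E$ with initial velocity $Z$. Let $\{\phi_t^Z\}$, $\{\phi_s^Y\}$ be the flows of $Z$, $Y$; both preserve $\Omega$, hence $\mathcal A$, by the regularity of $\partial\Omega$. Put $\Phi(t,s):=\phi_s^Y\circ\phi_t^Z$ and $v(t,s):=|\Phi(t,s)(E)|=\int_E|\det D\Phi(t,s)|\,dx$. Then $v$ is $C^1$ near $(0,0)$ with $v(0,0)=m$, $\partial_s v(0,0)=\int_E\div Y\,dx=1$ and $\partial_t v(0,0)=\int_E\div Z\,dx=0$; the implicit function theorem yields $\sigma\in C^1$ near $0$ with $\sigma(0)=\sigma'(0)=0$ and $v(t,\sigma(t))\equiv m$. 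Hence $t\mapsto\Phi(t,\sigma(t))(E)$ is a curve in $\mathcal A_m$ whose initial velocity is $Z+\sigma'(0)Y=Z$. The first variation of $\mathcal E_\gamma$ along a $C^1$ family of diffeomorphisms through the identity depends only on the family's initial velocity and is given by the formula of Proposition~\ref{1st variation of perimeter} (this is exactly how that proposition is proved: the change of variables and the differentiation at $t=0$ only see the $t=0$ velocity of the deformation). Applying the chain rule along $t\mapsto\Phi(t,\sigma(t))$ therefore gives $\frac{d}{dt}\big|_{t=0}\mathcal E_\gamma(\Phi(t,\sigma(t))(E))=L(Z)+\sigma'(0)L(Y)=L(Z)$, which stationarity in $\mathcal A_m$ forces to vanish. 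In particular $\lambda=L(Y)$ is seen to be independent of the choice of $Y$.

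The step I expect to need the most care is the last one: Definition~\ref{defcp} is phrased for the flow of a single autonomous vector field, whereas $\Phi(t,\sigma(t))$ is in general not such a flow. I would resolve this exactly as indicated above, namely by observing that $\frac{d}{dt}\big|_{t=0}\mathcal E_\gamma$ along any $C^1$ deformation depends only on the initial velocity $Z$, so that testing stationarity against the admissible curve $t\mapsto\Phi(t,\sigma(t))(E)$ is legitimate; as an alternative one could first prove $L(X)=0$ for divergence-free $X$ (whose autonomous flows genuinely preserve volume and are thus directly admissible in the sense of Definition~\ref{defcp}) and then recover the general case by the same Lagrange-multiplier bookkeeping. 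What then remains is only a routine verification of signs to match $\vec H=-(\lambda-2\gamma\phi_E-f)\nu_E$; restricting to $X\in C_c^1(\Omega;\mathbb R^n)$ identifies this $\vec H$ with the generalized mean curvature of $\mu_E$, and retaining the full family of tangential $X$ yields the weak orthogonality to $\partial\Omega$, completing the proof.
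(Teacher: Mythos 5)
Your argument follows the same route as the paper: Proposition~\ref{1st variation of perimeter} gives the linear functional $L$, Lemma~\ref{lagrange1} supplies $Y$, and the implicit function theorem produces a genuinely volume-preserving two-parameter deformation from which the Lagrange multiplier is read off; you merely arrange the bookkeeping a bit more cleanly by first normalizing to $Z = X - cY$ with $\int_E \div Z\,dx = 0$ (so $\sigma'(0)=0$) and then invoking linearity, whereas the paper works with the fixed $X$ and $\sigma'(0) = -\int_E\div X\,dx$ directly. Your identification $\lambda = L(Y)$ (including the nonlocal and $f$ contributions) is in fact the correct value; the paper records $\lambda = \int_{\partial_\Omega^*E}\div_E Y\,d\mathcal H^{n-1}$, which omits those two terms, though this has no effect on the statement since only existence of some real $\lambda$ is claimed. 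One caveat on your closing aside: proving $L(X)=0$ only for pointwise divergence-free $X$ would not by itself recover all $Z$ with $\int_E\div Z\,dx=0$, since $Z=X-cY$ need not be divergence-free, so that alternative would require a further density or decomposition argument; the main proof, however, is sound, and you rightly flag the (shared, and essentially standard) point that Definition~\ref{defcp} is being extended from autonomous flows to general $C^1$ volume-preserving deformations via the fact that the first variation sees only the initial velocity.
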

\begin{proof}
\noindent {\emph{Step 1: Construction of the local variation}.}\\
The case of variations in $\mathcal{A}$ is an immediate consequence of Proposition \ref{1st variation of perimeter}. 
For the case of $\mathcal{A}_m$ let $Y \in C_c^1(\Omega;\mathbb R^n)$ be a vector field such that $\int_{E}\div Y dx =1$. The existence of such a vector field is guaranteed by Lemma~\ref{lagrange1}. 
Let $\{\phi_t\}$ be the flow of $X$ and $\{\psi_s\}$ the flow of $Y$. For $(t,s) \in \mathbb R^2$ set
$$ A(t,s):=P(\psi_s(\phi_t(E))) \cap \Omega)$$
and
$$\mathcal V(t,s):=\vert\psi_s(\phi_t(E))\vert -\vert E \vert .$$
Then $\mathcal V \in C^1(\mathbb R^2)$, $\mathcal V(0,0)=0$ and $\partial_s\mathcal V(0,0)=\int_{E}\div Y dx =1$. The implicit function theorem ensures the existence of an open interval $I$ containing $0$ and a function $\sigma\in C^1(I)$ such that 
$$\mathcal V(t,\sigma(t))=0\;\;\;\text{for all $t \in I$ and}\;\;\; \sigma'(0)=-\frac{\partial_t\mathcal V(0,0)}{\partial_s\mathcal V(0,0)}.$$
Hence, 
$$t \mapsto \psi_{\sigma(t)}\circ\phi_t$$
is a 1-parameter family of $C^1$-diffeomorphisms of $\overline\Omega$ and thus defines a volume preserving variation of $E$ in $\Omega$.\\

\noindent {\emph{Step 2: Computing the first variation}.}\\
The fact that $E$ is a stationary point in the class $\mathcal{A}_m$ then implies from Proposition \ref{1st variation of perimeter}
$$\frac{d}{dt}\Big|_{t=0} A(t,\sigma(t)) = \int_{\partial_\Omega^*E} \textrm{div}_E X= - 2\gamma\int_{\partial_\Omega^*E}  \phi_E X \cdot \nu_E \,d\mathcal{H}^{n-1} - \int_{\partial_\Omega^*E} f X \cdot \nu_E  \,d\mathcal{H}^{n-1}.$$
On the other hand, we have
\begin{align*}
\frac{d}{dt}\Big|_{t=0} A(t,\sigma(t)) & = \partial_t\mathcal A(0,0)+\sigma'(0)\partial_s\mathcal A(0,0) \\
& =\int_{\partial_\Omega^*E}\textrm{div}_E X \,d\mathcal{H}^{n-1} + \sigma'(0)\int_{\partial_\Omega^*E}\textrm{div}_E Y \,d\mathcal{H}^{n-1} \\
& =\int_{\partial_\Omega^*E}\textrm{div}_E  X \,d\mathcal{H}^{n-1} - \frac{\int_{E}\div X dx}{\int_{E}\div Y dx}\int_{\partial_\Omega^*E}\textrm{div}_E Y \,d\mathcal{H}^{n-1} \\
& =\int_{\partial_\Omega^*E}\textrm{div}_E  X \,d\mathcal{H}^{n-1} - \lambda \int_{\partial_\Omega^*E} X \cdot \nu_E \,d\mathcal{H}^{n-1},
\end{align*}
where $\lambda:=\int_{\partial_\Omega^*E}\textrm{div}_E Y \,d\mathcal{H}^{n-1}$, and where we used the divergence theorem on the last line. Therefore, setting $\vec H: = (2\gamma \phi_E +f -\lambda )\nu_E$, we have
$$\int_{\partial_\Omega^*E}\div_E X \,d\mathcal{H}^{n-1}= -\int_{\partial_\Omega^*E}  \vec H \cdot X \,d\mathcal{H}^{n-1}$$
for every vector field $X \in C_c^1(\mathbb R^n;\mathbb R^n)$ with $X \cdot \nu_\Omega =0$ on $\partial \Omega$. 
\end{proof}

\section{Proof of the Theorem \ref{thm:regularity}}\label{sec:proofofmain}

Firstly, notice that the weak orthogonality of $\mu_E$ and $\Omega$ is included in Proposition~\ref{firstvariation}.
\\\\
We want to apply Allard's regularity theorem (here Theorem~\ref{thm:Allard}) to establish the regularity of the reduced boundary $\partial_\Omega^*E$ . We verify the necessary hypotheses:

By De Giorgi's structure theorem (here Theorem \ref{thm:DeGiorgi}) and Remark~\ref{rmk:DeGiorgi}  we have that $\mu_E$ is a multiplicity-$1$ rectifiable $(n-1)$-varifold. Moreover, for each point $x \in \partial_\Omega^*E$ we have that $\Theta(\mu_E,x) = 1$. Now, we choose any point $x_0 \in \partial_\Omega^*E$. W.l.o.g., after possibly translating and rotating the set $E$, we may assume that $x_0 = 0$ and $\nu_E(0)= - e_n$. We fix any $p>n-1$ and pick $\delta \in (0,1)$ to be as in the statement of Theorem~\ref{thm:Allard}. Since $\Theta(\mu_E,0) = 1$ we can find a small radius $\rho>0$ such that
\begin{equation}\label{verifyingassumptions}
B_\rho(0) \subset \subset \Omega\quad\text{and}\quad \alpha_{n-1}^{-1} \rho^{-{n-1}} \mu_E(B_\rho(0)) \leq 1 + \delta.
\end{equation}
Proposition~\ref{firstvariation} implies that $\mu_E$ has generalized mean curvature $\vec H$ in $\Omega$, given by
\[
\vec H= -(\lambda - 2 \gamma \phi_E - f)\nu_E
\]
for some constant $\lambda \in \mathbb R$.
We have
\[
\|\vec H \|_{L^\infty(\mu_E\llcorner B_\rho(0))} \leq |\lambda|+2\gamma \sup_{B_\rho(0)}|\phi_E|+\sup_{B_\rho(0)}|f| =: c_0.
\]
With H\"older's inequality and \eqref{verifyingassumptions} we get
\begin{align*}
\left(\int_{B_\rho(0)} |\vec H|^p\,d\mu_E \right)^\frac{1}{p} \rho^{1-\frac{n-1}{p} }
& \leq c_0\, (1 + \delta)^\frac{1}{p} \alpha_{n-1}^\frac{1}{p} \,\rho,
\end{align*}
which is less that $\delta$ provided $\rho \leq \delta c_0^{-1}\, 2^{-\frac{1}{p}} \alpha_{n-1}^{-\frac{1}{p}}$.
Thus the hypotheses \eqref{hyp} are satisfied and Theorem \ref{thm:Allard} implies the existence of a function $u:B'( := B_{\gamma\rho}^{n-1}(0)) \to \mathbb R$ of class $C^{1,\alpha}$, $\alpha= 1- (n-1)/p$, such that $u(0) = 0$, $Du(0)=0$, and ${\rm spt}(\mu_E) \cap B_{\gamma \rho}(0) = {\rm graph}(u) \cap B_{\gamma \rho} (0)$.
Moreover, our orientation assumption on $E$ implies that $\overline E \cap (B' \times I) = \overline{\textrm{epigraph}(u) }\cap (B' \times I)$ for some open interval $0\in I$.

Now let $X(x',z) = \zeta(z) \eta(x') e_n$, where $\eta \in C_c^1(B')$, $x' \in B'$, $e_n=(0,...,0,1)$ is the $n$-th-standard basis vector, and where $\zeta \in C_c^\infty(\mathbb R)$ is a cut-off function such that $(\zeta\circ u)(x)=1$ for every $x \in B'$.

Then recalling that $\textrm{div}_{E} X = \div X - \nu_E \cdot D X \nu_E$, we have $\textrm{div}_E X = - (\nabla' \eta,0) \cdot \nu_E  \nu_E^n$ where $\nu_E^n$ is the $n$-th component of the normal vector, and where $\nabla'$ is the gradient in $\mathbb{R}^{n-1}$. Since $\partial^* E \cap (B' \times I)= \partial E \cap (B' \times I)$ is the graph of $u$, and by our orientation assumption, we 
have that $\nu_E^n = \frac{-1}{\sqrt{1 + |\nabla' u|^2}}$. Using the area formula, equation \eqref{guy} becomes
\begin{equation}\label{weakform}
 -\int_{B'} \frac{\nabla'\eta \cdot \nabla' u }{ \sqrt{1+|\nabla' u|^2}} \,dx' =\int_{B'} ( \lambda -2\gamma \,v_E(x',u) - f(x',u) ) \,\eta\,dx'.
\end{equation}
Equation \eqref{weakform} is the weak form of the prescribed mean curvature equation.
Since by Theorem~\ref{thm:Allard} the gradient of $u$ is locally uniformly bounded in $C^{0,\alpha}$ and since the right hand side of \eqref{weakform} is of class $C^{1,\alpha}$, interior Schauder estimates (see \cite{Gilbarg-Trudinger:2001}) and bootstrapping imply local $C^{3,\alpha}$ regularity of the function $u$. Thus \eqref{weakform} holds pointwise, and since $x_0 \in \partial_\Omega^*E$ was arbitrary we have
\[ H+ 2\gamma \phi_E +f=\lambda  \textrm{ on } \partial_\Omega^*E, \]
where $H$ is the classical mean curvature of the surface $\partial_\Omega^*E$.

\subsection{On the size of the singular set}

By a direct consequence the monotonicity formula, see \cite[Corollary 17.8]{Simon:1983}, we have that $\Theta(\mu_E,x)$ exists and that $\Theta(\mu_E,x) \geq 1$ for \emph{every} point $x \in {\rm spt}(\mu_E)=\overline{\partial E\cap \Omega}$.
This allows us to estimate the size of the singular set $(\partial E\setminus\partial^*E)\cap\Omega$.

\begin{proposition}\label{estimateondimofsingset1}
We have the following estimate
$$\mathcal H^{n-1}((\partial E\setminus\partial^* E)\cap \Omega)=0.$$
\end{proposition}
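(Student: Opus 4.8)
The plan is to use the standard density-based argument for stratifying the singular set, combined with the upper semicontinuity of the density and the fact that the reduced boundary has density exactly $1$. First I would recall that by De Giorgi's structure theorem (Theorem~\ref{thm:DeGiorgi}) we have $\Theta(\mu_E,x)=1$ for every $x\in\partial_\Omega^*E$, and by the monotonicity formula (as noted in the text, via \cite[Corollary 17.8]{Simon:1983}, using that $\mu_E$ has bounded generalized mean curvature locally by Proposition~\ref{firstvariation}) the density $\Theta(\mu_E,x)$ exists and satisfies $\Theta(\mu_E,x)\geq 1$ for \emph{every} $x\in\spt(\mu_E)=\overline{\partial E\cap\Omega}$. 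The key dichotomy is then: a point $x\in\partial E\cap\Omega$ either lies in $\partial_\Omega^*E$ (where $\Theta=1$ and, by the already-established Allard/Schauder argument, the boundary is a smooth graph), or it lies in the singular set $(\partial E\setminus\partial^*E)\cap\Omega$.

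The main point is to show that the singular set is $\mathcal H^{n-1}$-null. I would argue as follows. Suppose for contradiction that $\mathcal H^{n-1}\big((\partial E\setminus\partial^*E)\cap\Omega\big)>0$. Consider the set $S:=(\partial E\setminus\partial^*E)\cap\Omega$, which is relatively closed in $\partial E\cap\Omega$ (its complement $\partial_\Omega^*E$ is relatively open by the regularity just proved). By standard density estimates for sets of finite perimeter, $\mathcal H^{n-1}\big((\partial E\setminus\partial^*E)\cap\Omega\big)$ restricted to $S$ — more precisely one works with $\mu_E=\mathcal H^{n-1}\llcorner\partial_\Omega^*E$, which by Theorem~\ref{thm:DeGiorgi} charges only $\partial_\Omega^*E$ and gives $\mu_E(S)=0$. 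So the issue is genuinely about the Hausdorff measure $\mathcal H^{n-1}$ of $S$ itself, which a priori could be positive even though $\mu_E$ does not see it. To rule this out, I would invoke the upper density bound: at every $x\in S$ one has $\Theta(\mu_E,x)\geq 1$, hence for small $r$, $\mu_E(B_r(x))\geq \tfrac12\alpha_{n-1}r^{n-1}$; but simultaneously, since $\mu_E$ is concentrated on $\partial_\Omega^*E$ and $S$ is closed and disjoint from $\partial_\Omega^*E$, a Vitali covering argument applied to $S$ with balls on which $\mu_E$ has controlled mass forces $\mathcal H^{n-1}(S)\leq C\,\mu_E(\mathcal N_\epsilon(S))$ for neighbourhoods $\mathcal N_\epsilon(S)$ shrinking to $S$; letting $\epsilon\to0$ and using $\mu_E(S)=0$ together with outer regularity of the Radon measure $\mu_E$ gives $\mathcal H^{n-1}(S)=0$.

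Let me describe the covering step more carefully, since that is where the real content lies. Fix $\epsilon>0$. For each $x\in S$ the density lower bound gives a radius $r_x\in(0,\epsilon)$ with $B_{r_x}(x)\subset\Omega$ and $\mu_E(B_{r_x}(x))\geq \tfrac12\alpha_{n-1}r_x^{n-1}$, equivalently $r_x^{n-1}\leq 2\alpha_{n-1}^{-1}\mu_E(B_{r_x}(x))$. By the Vitali $5r$-covering lemma extract a countable disjoint subfamily $\{B_{r_i}(x_i)\}$ with $S\subset\bigcup_i B_{5r_i}(x_i)$. Then, using the definition of Hausdorff measure with the covering by the balls $B_{5r_i}(x_i)$,
\[
\mathcal H^{n-1}_{10\epsilon}(S)\;\leq\;\sum_i \alpha_{n-1}(5r_i)^{n-1}\;\leq\;5^{n-1}\,2\sum_i\mu_E(B_{r_i}(x_i))\;\leq\;2\cdot 5^{n-1}\,\mu_E\big(\mathcal N_{2\epsilon}(S)\big),
\]
where the last inequality uses that the balls $B_{r_i}(x_i)$ are disjoint and contained in the $2\epsilon$-neighbourhood $\mathcal N_{2\epsilon}(S)$ of $S$ (intersected with $\Omega$). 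Now let $\epsilon\to0$: the left side tends to $\mathcal H^{n-1}(S)$, while $\mathcal N_{2\epsilon}(S)\cap\Omega$ decreases to $\overline S\cap\Omega=S$ (here using that $S$ is relatively closed in $\Omega$), so by continuity of the Radon measure $\mu_E$ from above on sets of finite measure, $\mu_E(\mathcal N_{2\epsilon}(S)\cap\Omega)\to\mu_E(S)=0$. Hence $\mathcal H^{n-1}(S)=0$.

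The main obstacle I expect is the subtle point that $\mu_E(S)=0$ does not by itself imply $\mathcal H^{n-1}(S)=0$ — these could differ on the non-rectifiable part — so the density lower bound $\Theta(\mu_E,\cdot)\geq 1$ on $\spt(\mu_E)$ is essential, as it converts the covering of $S$ (by Hausdorff measure) into a packing estimate controlled by $\mu_E$. One must also be careful that $S$ is relatively closed \emph{in $\Omega$} (not just in $\overline\Omega$), which follows because $\partial_\Omega^*E$ is relatively open in $\partial E\cap\Omega$ by the $C^{3,\alpha}$-regularity established above, and that the balls used in the covering stay inside $\Omega$ so that the monotonicity/density statements apply; both are handled by working at points with positive distance to $\partial\Omega$ and exhausting $\Omega$ by compact subsets.
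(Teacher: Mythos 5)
Your argument is correct and follows essentially the same route as the paper: density lower bound from the monotonicity formula, the fact that $\mu_E(S)=0$ since $\mu_E=\mathcal H^{n-1}\llcorner\partial_\Omega^*E$, a Vitali $5r$-covering to convert the Hausdorff premeasure bound into a packing bound controlled by $\mu_E$, and then sending the scale and the mass bound to zero. The only cosmetic difference is where you center the balls: you center them at singular points $x\in S$ and invoke the monotonicity-formula consequence $\Theta(\mu_E,x)\geq 1$ on all of $\spt(\mu_E)$, whereas the paper centers its balls at nearby points of $\partial_\Omega^*E$ (using the w.l.o.g.\ property $\overline{\partial_\Omega^*E}=\partial E\cap\overline\Omega$) and uses an open set $U_\varepsilon\supset S$ with $\mu_E(U_\varepsilon)\leq\varepsilon$ from outer regularity of the Radon measure; this sidesteps your reliance on $S$ being relatively closed, which your ``continuity from above'' step requires (though that property is indeed true here). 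Both versions are valid and of comparable length.
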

\begin{proof}W.l.o.g. we may assume that $\Omega$ is bounded. Otherwise, we may exhaust $\Omega$ with bounded sets. We know that $\mu_{E}=\mu_{E} \llcorner \partial_\Omega^*E$. Hence,
$$\mu_{E}((\partial E\setminus\partial^* E)\cap \Omega)=0.$$
Since $\mu_{E}$ is a Radon measure, given an $\varepsilon>0$, there exists an open set $U_{\varepsilon} \subset \Omega$  containing $(\partial E\setminus\partial^* E)\cap \Omega$ such that
$$\mu_{E}(U_{\varepsilon})\leq\varepsilon.$$
Now, by our w.l.o.g.-assumptions: $\overline{\partial_\Omega^*E}=\partial E\cap \overline \Omega$, and so for any fixed $\delta>0$ 
$$(\partial E\setminus\partial^* E)\cap \Omega\subset \bigcup\mathcal F,$$
where $\mathcal F:=\{ B_{\rho}(x)\subset\subset U_{\varepsilon}: x\in\partial_\Omega^*E\;\;\text{and}\;\;\rho\leq \delta\}$. By Vitali's covering theorem there exists a countable family $\mathcal G \equiv \{B_{\rho_{j}}(x_j):j\in \mathbb N\}$ of disjoint balls in $\mathcal F$ such that
$$\bigcup \mathcal F \subset \bigcup_{j=1}^\infty\overline B_{5\rho_j}(x_j).$$
Hence,
$$\sum_{j=1}^\infty\mu_{E}( B_{\rho_j}(x_j))\leq \mu_{E}(U_{\varepsilon})\leq  \varepsilon.$$
On the other hand, by the monotonicity formula \cite[Theorem 17.7]{Simon:1983},
\begin{align*}
	\mu_{E}( B_{\rho_j}(x_j))\geq \frac{\alpha_{n-1}}{2}\rho_j^{n-1},
\end{align*}
if $\delta$ is small enough as to guarantee that 
$$\alpha_{n-1}^{\frac{n-1}{p}}\left(1- 2^{-\frac{1}{p}} \right) \geq \frac{\| \vec H\|_{L^p}}{p-(n-1)}\delta^{1-\frac{n-1}{p}} ,$$
for some $p>n-1$.
Therefore,
$$ \alpha_{n-1}\sum_{j=1}^\infty \rho_j^{n-1}\leq 2\varepsilon, $$
which yields
\begin{align*}
\mathcal H_{5\delta}^{n-1}((\partial E\setminus\partial^* E)\cap \Omega) & \leq \alpha_{n-1}\sum_{j=1}^\infty(5\rho_j)^{n-1} \leq 5^{n}	\varepsilon,
\end{align*}
for any $\varepsilon>0$ and $\delta>0$. Letting $\varepsilon,\delta\searrow 0$ we conclude
$$\mathcal H^{n-1}((\partial E\setminus\partial^* E)\cap \Omega) =0.$$
\end{proof}
\begin{remark}
It is an interesting question whether the estimate on the Hausdorff dimension of the singular set can be improved under the additional assumption of stability.
Even without the nonlocal term this is an open problem in the class $\mathcal A_m$. For the case of minimal surfaces Wickramasekera \cite{Wickramasekera:2014} recently showed that in this case the singular set has Hausdorff dimension at most $n-8$.
\end{remark}
\section{Boundary regularity of local minimizers}\label{sec:bdryregularity}
In this section we outline how Theorem~\ref{thm:regularity} can be used to prove boundary regularity, that is regularity near points $x\in \partial \Omega\cap \partial E$, for local minimizers $E$ of $\mathcal E_\gamma$ in $\mathcal A$ or $\mathcal A_m$. This has already been established in \cite{Julin-Pisante:2013} but we include it for convenience of the reader. 

As mentioned earlier, the interior regularity for local minimizers of $\mathcal E_\gamma$ was proved by Sternberg and Topaloglu \cite[Propostion 2.1]{Sternberg-Topaloglu:2011}. The authors prove that local minimizers of $\mathcal E_\gamma$ are $(K,\varepsilon)$-minimal and can thus appeal to the standard methods (cf. \cite{Massari:1974}).
We include a slightly different proof. 

\begin{definition}\label{def:locmin}
We say that $E \in\mathcal A$ or $\mathcal A_m$ is a \emph{local minimizer} of $\mathcal E_\gamma$ in $\mathcal A$ or $\mathcal A_m$ (at scale $R$) if 
for all balls $B_R(x)\subset\mathbb R^n$ we have that
\begin{equation}\label{locmin}
\mathcal E_\gamma(E) \leq \mathcal E_\gamma(F)\quad\text{for all $F\in \mathcal A$ or $\mathcal A$ with $E\Delta F\subset\subset B_R(x)$}.
\end{equation}
\end{definition}

\begin{remark}\label{intextpoints}
Theorem~\ref{thm:regularity} implies that for any ball $B_\rho(x)\subset \mathbb R^n$ with $0<|E\cap B_\rho(x)|<|\Omega \cap B_\rho(x)|$ we can find exterior and interior points, i.e. there exist two balls $B_r(a),B_r(b)\subset\subset \Omega \cap B_\rho(x)$ with $\bigcup_{t\in [0,1]} B_r(ta+(1-t)b)\subset\subset \Omega$ such that
\[
|B_r(a)\setminus E|=|E \cap B_r(a)|=0.
\]
\end{remark}

We are now ready to prove the following 

\begin{proposition}[cf. {\cite[Propostion 2.1]{Sternberg-Topaloglu:2011}} ]\label{prop:quasimin}
Let $E \in\mathcal A$ or $\mathcal A_m$ be a local minimizer of $\mathcal E_\gamma$ in $\mathcal A$ or $\mathcal A_m$ at scale $2R_0 >0$, and let $0<|E\cap B_{R_0}(x_0)|<|\Omega \cap B_{R_0}(x_0)|$ for some ball $B_{R_0}(x_0) \subset \mathbb R^n$. Then $E$ is $(K,\varepsilon)$-minimal in $B_R(x_0)$ for some $R\leq R_0$, that is for every $B_\rho \subset\subset B_R(x_0)$
\[
P(E,\Omega )\leq P(F,\Omega) +K\rho^{n} \quad\text{for all $F$ such that $F\Delta E \subset\subset B_\rho$.}
\]
\end{proposition}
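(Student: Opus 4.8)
The plan is to show that the local minimality of $E$ for $\mathcal E_\gamma$ transfers to an almost-minimality property for the perimeter alone, by controlling the nonlocal term and the forcing term under compactly supported perturbations. First I would fix a ball $B_\rho = B_\rho(y) \subset\subset B_R(x_0)$ and a competitor $F$ with $F \Delta E \subset\subset B_\rho$; by Definition~\ref{def:locmin} (choosing $R$ below $R_0$ so that all such $B_\rho$ sit inside the ball $B_{2R_0}(x_0)$ witnessing local minimality, and noting that $F\in\mathcal A$, resp.\ $\mathcal A_m$, can be arranged — in the volume-constrained case one first adjusts $F$ on a fixed interior ball, see below) we get
\[
P(E,\Omega) + \gamma \int_E\int_E G + \int_E f \;\leq\; P(F,\Omega) + \gamma \int_F\int_F G + \int_F f,
\]
so that
\[
P(E,\Omega) \;\leq\; P(F,\Omega) + \gamma\Big(\int_F\int_F G - \int_E\int_E G\Big) + \int_{F\setminus E} f - \int_{E\setminus F} f.
\]
It therefore remains to bound the last two (nonlocal and forcing) differences by $C|B_\rho| = C\omega_n\rho^n$.

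For the forcing term this is immediate: $|\int_{F\setminus E} f - \int_{E\setminus F} f| \le \|f\|_{L^\infty(\spt X)} |F\Delta E| \le \|f\|_{\infty}|B_\rho|$ since $F\Delta E\subset B_\rho$ and both sets lie in a fixed bounded region. For the nonlocal term I would write, using symmetry of $G$ and abbreviating $D := F\Delta E$ with $|D|\le |B_\rho|$,
\[
\int_F\int_F G - \int_E\int_E G \;=\; \int_{F}\int_F G - \int_E\int_E G \;=\; \int_{(F\setminus E)}\!\!\int_{F\cup E} G + \int_{F\cup E}\!\!\int_{F\setminus E} G - (\text{analogous }E\setminus F\text{ terms}),
\]
more cleanly: $\int_F\phi_F - \int_E\phi_E$ where $\phi_A(x)=\int_A G(x,y)\,dy$; expanding, each resulting term is an integral of $G(x,y)$ over a product set one of whose factors has measure at most $|B_\rho|$ and the other of which is contained in a fixed bounded set $\widetilde\Omega \supset E\cup F$. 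The key estimate is then that $\sup_{x}\int_{\widetilde\Omega} |G(x,y)|\,dy \le C(\widetilde\Omega, n) < \infty$, which holds because $G = \Gamma + R$ with $\Gamma$ locally integrable (its singularity $|x-y|^{2-n}$, resp.\ $\log|x-y|$, resp.\ $|x-y|^{-\beta}$ with $\beta<1$, is integrable in dimension $n$) and $R$ continuous on $\widetilde\Omega\times\widetilde\Omega$ (indeed $\phi_E\in C^{1,\alpha}_{loc}(\overline\Omega)$ as recalled after \eqref{vdef1}). Hence each such term is bounded by $C|D| \le C|B_\rho|$, giving $P(E,\Omega)\le P(F,\Omega) + K\rho^n$ with $K = K(n,\gamma,\|f\|_\infty,\widetilde\Omega)$, which is the claim with this $K$ and $\varepsilon$ absorbed into the exponent (one may shrink $R$ further if a clean statement with $\rho^{n-1+\varepsilon}$ and $\varepsilon\in(0,1)$ is wanted, writing $\rho^n = \rho^{n-1+1}\le R_0\,\rho^{n-1+1/2}$ etc.).

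The main obstacle is the volume constraint in the class $\mathcal A_m$: an arbitrary competitor $F$ with $F\Delta E\subset\subset B_\rho$ need not satisfy $|F| = m$, so \eqref{locmin} does not apply directly. To handle this I would use Remark~\ref{intextpoints} (a consequence of Theorem~\ref{thm:regularity}): there are fixed interior and exterior balls $B_r(a), B_r(b)\subset\subset\Omega$, disjoint from $\overline{B_R(x_0)}$ for $R$ chosen small enough, with $B_r(a)\subset E$ and $B_r(b)\cap E$ negligible. One then modifies $F$ inside $B_r(a)\cup B_r(b)$ to restore the volume $|F| = m$ — adding or removing a volume $\le |B_\rho|$ near $a$ or $b$ — in a way that changes the perimeter by at most $C\big||F|-m\big| \le C|B_\rho|$ (a standard construction, e.g.\ sliding a spherical cap, whose perimeter cost is controlled since $E$ near $a$ and $b$ is, by Theorem~\ref{thm:regularity}, just a half-space/empty up to the relevant scale). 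The nonlocal and forcing terms change by at most $C|B_\rho|$ by the same $L^1$-bounds as above. Applying \eqref{locmin} to this volume-corrected competitor and reabsorbing the extra $C\rho^n$ error yields the stated $(K,\varepsilon)$-minimality; in the unconstrained class $\mathcal A$ this step is vacuous and the argument is as in the previous paragraph.
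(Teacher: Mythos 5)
Your overall strategy matches the paper's: start from the inequality furnished by local minimality, isolate the differences in the nonlocal and forcing terms, and show that these differences are $O(\rho^{n-1+\varepsilon})$; then, in the volume-constrained case, use the interior and exterior balls of Remark~\ref{intextpoints} to restore the volume at an $O(\rho^n)$ cost in perimeter. The only substantive deviation is in how the nonlocal difference is estimated. You bound it directly by
\[
\bigl|E\Delta F\bigr|\,\sup_{x}\int_{\widetilde\Omega}|G(x,y)|\,dy \;\le\; C\,|B_\rho|,
\]
which yields the clean exponent $\varepsilon=1$ (i.e.\ $K\rho^n$, which is exactly what the proposition's display reads). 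The paper instead splits the difference into a term controlled by $\|G\|_{L^1(A\times A)}$ and a term controlled by H\"older against $\phi_E\in L^p_{loc}(\overline\Omega)$ for some $p>n$, and arrives at $\varepsilon = 1-\tfrac{n}{p}\in(0,1)$. Your route is marginally stronger and shorter; it implicitly requires the pointwise kernel bound $\sup_x\int_{\widetilde\Omega}|G(x,y)|\,dy<\infty$ on a set $\widetilde\Omega\supset E\cup F$ that may reach up to $\partial\Omega$ (true for the Newtonian potential and for $\Gamma_\beta$ trivially, and true for the Neumann Green's function on a $C^2$ domain, though this is slightly more than just ``$R$ continuous''), whereas the paper's H\"older variant leans only on $\phi_E$ being locally bounded in $\overline\Omega$ and $G\in L^1$ locally. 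Both are legitimate, and either exponent suffices for the subsequent $(K,\varepsilon)$-minimality application. For the volume correction you invoke the same geometric construction the paper does and cite the same source material in spirit; the only thing to make sure of is, as you note, that $B_r(a)$ and $B_r(b)$ are chosen disjoint from $\overline{B_R(x_0)}$ and contained in a fixed ball where local minimality applies, which can be arranged by shrinking $R$.
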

\begin{proof}
Let $B_\rho \subset\subset B_R(x_0)$ and let $F$ be such that $F\Delta E \subset\subset B_\rho$. 
We only give a proof for local minimizers in $\mathcal A$. (For the case with a volume constraint one may use Remark~\ref{intextpoints} to adjust the volume of the competitor $F$ which gives us the additional term $\frac{c(n)}{r}\rho^n$ on the right hand side of equation \eqref{minprop} below. We refer to \cite{Gonzalez-Massari-Tamanini:1983} for details. Alternatively, one can proceed as in \cite{Sternberg-Topaloglu:2011} and use a result of Giusti \cite[Lemma 2.1]{Giusti:1981} to balance out the volume constraint.)

By \eqref{locmin} we have that
\begin{align}\label{minprop}
P(E,\Omega )
&\leq P(F,\Omega)  +\gamma \int_F\int_FG(x,y)\,dx\,dy -\gamma \int_E\int_E G(x,y)\,dx\,dy \\
&\quad+ \int_{ F}f\,dx- \int_{E}f\,dx.\nonumber
\end{align}
The last two terms can be estimated by $ \int_{\Omega \cap B_\rho }f\,dx \leq c(n) \| f\|_{L^\infty}\rho^n$, cf. \cite{Massari:1974}. In remains to estimate the difference of the nonlocal terms.
Setting $A:=\Omega \cap B_R(x_0)$, we estimate for any $p>n$
\begin{align*}
& \int_F\int_FG(x,y)\,dx\,dy  - \int_E\int_E G(x,y)\,dx\,dy \\
&\quad \leq \int_F\left(\int_{F\cap B_\rho} G(x,y)\,dx - \int_{E\cap B_\rho}G(x,y)\,dx \right)\,dy+\int_{E\Delta F}|\phi_E|\,dx \\
&\quad \leq \int_{E\cup (\Omega \cap B_\rho)}\left(\int_{\Omega \cap B_\rho} |\chi_{F\cap B_\rho}(x)-\chi_{E\cap B_\rho}(x)||G(x,y)|\,dx \right)\,dy+\int_{B_\rho}|\phi_E|\,dx \\
&\quad \leq \|G\|_{L^1(A \times A) } |B_\rho| +\left( \int_{A}|\phi_E|^p\,dx\right)^\frac{1}{p} |B_\rho|^{1-\frac{1}{p}}\\
&\quad \leq c(n,p,G,E) \rho^{n-1+(1-\frac{n}{p})}.
\end{align*}
The claim follows with $\varepsilon= 1-\frac{n}{p}$ for any $p>n$ and $K=c(n,p,G,E)$ (or $K=c(n,p,G,E,r)$ in case of a volume constraint with $r$ as in Remark~\ref{intextpoints}).
\end{proof}

Theorem~\ref{thm:regularity} and Proposition~\ref{prop:quasimin} in conjunction with the results of Gr\"uter \cite{Grueter:1987} in which boundary regularity of $(K,\varepsilon)$-minimizers with weakly orthogonal surface measure was shown, immediately imply the following
\begin{theorem}[cf. {\cite[Theorem 3.2]{Julin-Pisante:2013}}]
Let $E\in \mathcal A$ or $\mathcal A_m$ be a local minimizer of $\mathcal E_\gamma$ in $\mathcal A$ or $\mathcal A_m$. Then
\begin{enumerate}
\item ${\rm reg}(\mu_E)$ is of class $C^{1,\alpha}$ for all $\alpha \in (0,1)$, ${\rm reg}(\mu_E)\cap\Omega$ is of class $C^{3,\alpha}$ for all $\alpha \in (0,1)$ and has mean curvature $ H = \lambda- 2\gamma \phi_E -f$ for some constant $\lambda \in \mathbb R$. If $x\in {\rm reg}(\mu_E)\cap \partial \Omega$ then ${\rm reg}(\mu_E)$ and $\partial \Omega$ intersect orthogonally in a neighborhood of $x$.
\item $\mathcal H^s({\rm sing}(\mu_E) ) = 0\quad\text{for all $s>n-8$}.$
\end{enumerate}
\end{theorem}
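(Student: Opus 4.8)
The strategy is to derive the statement from the two ingredients already established --- the interior regularity and weak orthogonality of Theorem~\ref{thm:regularity}, and the quasi-minimality of Proposition~\ref{prop:quasimin} --- together with the classical regularity theory for almost-minimizers of perimeter and with Gr\"uter's boundary regularity theorem \cite{Grueter:1987}. Here ${\rm reg}(\mu_E)$ denotes the set of $x\in\spt(\mu_E)=\partial E\cap\overline\Omega$ near which $\spt(\mu_E)$ is an embedded $C^1$ hypersurface (with boundary contained in $\partial\Omega$, and no boundary if $x\in\Omega$), and ${\rm sing}(\mu_E):=\spt(\mu_E)\setminus{\rm reg}(\mu_E)$. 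First I would observe that a local minimizer of $\mathcal E_\gamma$ in $\mathcal A$ or $\mathcal A_m$ is in particular a stationary point in the sense of Definition~\ref{defcp} (along any admissible flow $\phi_t$ the map $t\mapsto\mathcal E_\gamma(\phi_t(E))$ has a minimum at $t=0$), so Theorem~\ref{thm:regularity} applies directly and already yields: $\partial_\Omega^*E$ is relatively open in $\partial E\cap\Omega$ and of class $C^{3,\alpha}$ for all $\alpha\in(0,1)$, with $H=\lambda-2\gamma\phi_E-f$ on it for some $\lambda\in\mathbb R$, and $\mu_E$ is weakly orthogonal to $\partial\Omega$. What remains is (i) to sharpen the interior bound $\mathcal H^{n-1}((\partial E\setminus\partial^*E)\cap\Omega)=0$ to the dimension estimate $\dim_{\mathcal H}\le n-8$, and (ii) to treat points of $\partial E\cap\partial\Omega$.

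For (i): note that whenever $x_0\in\spt(\mu_E)$, the measure $\mu_E$ has positive mass in every neighborhood of $x_0$, so (by property (b) of the chosen representative if $x_0\in\Omega$, or by the relative isoperimetric inequality in $\Omega\cap B_{R_0}(x_0)$ if $x_0\in\partial\Omega$) we have $0<|E\cap B_{R_0}(x_0)|<|\Omega\cap B_{R_0}(x_0)|$ for all small $R_0$; in particular, for interior $x_0$ this reads $0<|E\cap B_{R_0}(x_0)|<|B_{R_0}(x_0)|$. Proposition~\ref{prop:quasimin} then shows $E$ is $(K,\varepsilon)$-minimal in a ball around $x_0$ with $\varepsilon=1-\tfrac np\in(0,1)$ for any $p>n$. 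The standard regularity theory for $(K,\varepsilon)$-minimizers of perimeter (see \cite{Giusti:1984,Massari-Miranda:1984,Maggi:2012,Massari:1974}) gives that $\partial^*E\cap\Omega$ is relatively open and $C^{1,\varepsilon/2}$, that every blow-up of $E$ at an interior point is a perimeter-minimizing cone, and --- via Federer's dimension-reduction argument --- that $(\partial E\setminus\partial^*E)\cap\Omega$ is empty for $n\le 7$ and has Hausdorff dimension at most $n-8$ for $n\ge 8$; hence $\mathcal H^s((\partial E\setminus\partial^*E)\cap\Omega)=0$ for every $s>n-8$. On the regular interior part the higher regularity and the mean-curvature identity are exactly the content of Theorem~\ref{thm:regularity} (interior Schauder estimates and bootstrapping applied to \eqref{weakform}, using $\phi_E\in C^{1,\alpha}_{loc}$, $f\in C^2_{loc}$; cf. \cite{Gilbarg-Trudinger:2001}).

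For (ii): fix $x_0\in\partial E\cap\partial\Omega$. As above $0<|E\cap B_{R_0}(x_0)|<|\Omega\cap B_{R_0}(x_0)|$ for small $R_0$, so Proposition~\ref{prop:quasimin} shows $E$ is $(K,\varepsilon)$-minimal in a ball $B_R(x_0)$ (in the volume-constrained case carrying the extra $\tfrac{c(n)}{r}\rho^n$ error from the volume adjustment of Remark~\ref{intextpoints}, which is again $O(\rho^{n-1+\varepsilon'})$ and thus harmless), and by Theorem~\ref{thm:regularity}/Proposition~\ref{firstvariation} its surface measure $\mu_E$ is weakly orthogonal to $\partial\Omega$. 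These are precisely the hypotheses of Gr\"uter's boundary regularity theorem \cite{Grueter:1987}, which yields that in a neighborhood of $x_0$, away from a relatively closed set of Hausdorff dimension at most $n-8$, $\spt(\mu_E)=\partial E\cap\overline\Omega$ is a $C^{1,\alpha}$ hypersurface-with-boundary whose boundary lies on $\partial\Omega$ and which meets $\partial\Omega$ orthogonally. Thus such points belong to ${\rm reg}(\mu_E)$, the orthogonal-intersection statement holds on ${\rm reg}(\mu_E)\cap\partial\Omega$, and ${\rm sing}(\mu_E)\cap\partial\Omega$ has Hausdorff dimension at most $n-8$. Combining (i) and (ii): ${\rm reg}(\mu_E)$ is $C^{1,\alpha}$ for all $\alpha\in(0,1)$, ${\rm reg}(\mu_E)\cap\Omega$ is $C^{3,\alpha}$ with $H=\lambda-2\gamma\phi_E-f$, and $\mathcal H^s({\rm sing}(\mu_E))=0$ for all $s>n-8$.

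The genuinely new input is already in Proposition~\ref{prop:quasimin} (the quasi-minimality, with an admissible exponent arbitrarily close to $1$) and Theorem~\ref{thm:regularity} (interior $C^{3,\alpha}$-regularity, Euler--Lagrange equation, weak orthogonality), so the steps above are essentially bookkeeping of citations. The one place that requires genuine care --- and the main obstacle --- is the application of Gr\"uter's theorem in (ii): one must check that the weak orthogonality furnished by Proposition~\ref{firstvariation} coincides with the hypothesis used in \cite{Grueter:1987} (equivalently, that the Gauss--Green measure $\vec\mu_E^{\,*}$ of $E$ as a finite-perimeter set in $\mathbb R^n$ carries no anomalous mass on $\partial\Omega$), and that the interior/exterior ball condition of Remark~\ref{intextpoints} supplies the upper and lower density bounds at boundary points needed to run the boundary monotonicity formula and boundary $\varepsilon$-regularity of \cite{Grueter:1987}; the dimension bound $\le n-8$ at the boundary then follows by the same reflection-and-dimension-reduction argument as in the interior. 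For local minimizers in $\mathcal A_m$ one simply keeps the volume-adjustment term throughout, which changes no exponent.
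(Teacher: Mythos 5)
Your proposal is correct and follows exactly the route the paper takes: the paper proves this theorem in a single sentence by combining Theorem~\ref{thm:regularity} (interior $C^{3,\alpha}$-regularity, Euler--Lagrange equation, weak orthogonality), Proposition~\ref{prop:quasimin} ($(K,\varepsilon)$-minimality), and Gr\"uter's boundary regularity theorem~\cite{Grueter:1987}. What you have added is the careful bookkeeping of how these three ingredients interlock --- including the remark that a local minimizer is a stationary point, the use of standard $(K,\varepsilon)$-minimizer theory to upgrade the interior singular-set estimate from $\mathcal H^{n-1}$-null to dimension $\le n-8$, and the verification of Gr\"uter's hypotheses at boundary points --- all of which the paper leaves implicit.
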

\begin{definition}[cf. \cite{Grueter:1987}] 
Here ${\rm reg}(\mu_E)$ is defined as the set of all points in $\partial E \cap \overline \Omega ={\rm spt}(\mu_E)$ such that one of the following alternatives holds.
\begin{enumerate}
\item If $x\in {\rm reg}(\mu_E) \cap \Omega$ there exits an oriented $C^1$-hypersurface $M_x$ such that $\mu_E = \mathcal H^{n-1}\llcorner M_x$ and $\nu_E= \nu_{M_x}$ in a neighborhood of $x$.
\item If $x\in {\rm reg}(\mu_E) \cap \partial\Omega$ there exits an oriented $C^1$-hypersurface $M_x'$ with boundary inside $\partial \Omega$ such that $\mu_E = \mathcal H^{n-1}\llcorner M_x'$ and $\nu_E= \nu_{M_x'}$ in a neighborhood of $x$.
\end{enumerate}
And ${\rm sing}(\mu_E):= \partial E \cap \overline \Omega\setminus {\rm reg}(\mu_E)$.
\end{definition}

\begin{remark}
In case $\Omega$ is of class $C^{k,\alpha}$ for $k=2,3$ we get that ${\rm reg}(\mu_E)$ is of class $C^{k,\alpha}$ (up to the boundary).
\end{remark}
\noindent \textbf{Acknowledgments} The research of the first-named author was
supported by the Herchel Smith Research Fellowship at the University of Cambridge and NSF grant  DMS-0807347. The first-named author
would like to thank Theodora Bourni and Robert Haslhofer
for helpful discussions throughout the course of this work.

\bibliographystyle{amsplain}
\bibliography{references}

\end{document}